\documentclass[10pt,draft]{article}
\usepackage{amsmath,amsthm,amssymb,amsfonts}
\usepackage{enumerate}
\usepackage{mathrsfs}
\usepackage[cmtip,all]{xy}
\usepackage{tikz}
\usetikzlibrary{shapes.geometric}

\numberwithin{equation}{section}
\numberwithin{figure}{section}

\addtolength{\evensidemargin}{-1cm}
\addtolength{\oddsidemargin}{-1cm}
\addtolength{\textwidth}{2cm}

\normalsize

\newtheoremstyle{theoremstyle}
  {10pt}      
  {5pt}       
  {\itshape}  
  {}          
  {\bfseries} 
  {:}         
  {.5em}      
  {}          

\newtheoremstyle{examplestyle}
  {10pt}      
  {5pt}       
  {}          
  {}          
  {\bfseries} 
  {:}         
  {.5em}      
  {}          

\theoremstyle{theoremstyle}
\newtheorem{theorem}{Theorem}[section]
\newtheorem*{theorem*}{Theorem}
\newtheorem{lemma}[theorem]{Lemma}
\newtheorem{proposition}[theorem]{Proposition}
\newtheorem*{proposition*}{Proposition}

\newtheorem*{corollary*}{Corollary}

\theoremstyle{examplestyle}

\newtheorem{definition}[theorem]{Definition}
\newtheorem{definition*}{Definition}
\newtheorem{remark}[theorem]{Remark}
\newtheorem{remark*}{Remark}

\let\smash=\wedge
\let\iso=\cong

\let\directsum=\oplus

\newcommand{\unit}{\mathbf{1}}
\newcommand{\SH}{\mathbf{SH}}
\newcommand{\eff}{\mathbf{eff}}
\newcommand{\veff}{\mathbf{veff}}

\newcommand{\sliced}{\mathbf{d}}

\newcommand{\holim}{\operatornamewithlimits{holim}}
\newcommand{\Ext}{{\operatorname{Ext}}}
\newcommand{\Spec}{{\operatorname{Spec}}}

\newcommand{\PP}{\mathbf{P}}
\newcommand{\MGL}{\mathbf{MGL}}

\newcommand{\MU}{\mathbf{MU}}

\newcommand{\KQ}{\mathbf{KQ}}
\newcommand{\KW}{\mathbf{KW}}

\newcommand{\KO}{\mathrm{KO}}
\newcommand{\KU}{\mathrm{KU}}
\newcommand{\ko}{\mathrm{ko}}
\newcommand{\ku}{\mathrm{ku}}

\newcommand{\KGL}{\mathbf{KGL}}
\newcommand{\kgl}{\mathbf{kgl}}
\newcommand{\kq}{\mathbf{kq}}
\newcommand{\kqq}{\f_{0}\KQ}

\newcommand{\E}{\mathsf{E}}

\newcommand{\M}{\mathbf{M}}

\newcommand{\F}{\mathsf{F}}

\newcommand{\dd}{\mathsf{d}}

\newcommand{\s}{\mathsf{s}}
\newcommand{\f}{\mathsf{f}}

\newcommand{\hyp}{\mathrm{h}}
\newcommand{\forg}{\mathrm{f}}

\newcommand{\ZZ}{{\mathbb Z}}
\newcommand{\pr}{{\mathrm{pr}}}
\newcommand{\id}{{\mathrm{id}}}

\newcommand{\A}{\mathbf{A}}
\newcommand{\G}{\mathbf{G}_{\mathfrak{m}}}

\newcommand{\Char}{\mathsf{char}}

\newcommand{\CC}{\mathbb{C}}

\newcommand{\MZ}{\mathbf{M}\mathbb{Z}}

\newcommand{\Z}{\mathbb{Z}}

\newcommand{\Sq}{\mathsf{Sq}}

\newcommand{\Sm}{\mathbf{Sm}}

\title{{\bf On very effective hermitian $K$-theory}}
\author{Alexey Ananyevskiy, Oliver R\"ondigs, Paul Arne {\O}stv{\ae}r}
\begin{document}
\maketitle
\begin{abstract}
We argue that the very effective cover of hermitian $K$-theory in the sense of motivic homotopy theory is a convenient algebro-geometric generalization of the connective real topological $K$-theory spectrum.
This means the very effective cover acquires the correct Betti realization, 
its motivic cohomology has the desired structure as a module over the motivic Steenrod algebra, 
and that its motivic Adams and slice spectral sequences are amenable to calculations.
\end{abstract}

\section{Introduction}
\label{section:introduction}
Algebraic and hermitian $K$-theory have been widely studied since the pioneering works on the Grothendieck-Riemann-Roch theorem \cite{MR0354655} and on rings with anti-involutions \cite{MR592291}.
Both theories are representable in the stable motivic homotopy category $\SH$ of a field of characteristic $\neq 2$, 
and more generally over regular noetherian base schemes of finite Krull dimension on which $2$ is invertible \cite{hornbostel.hermitian}, \cite{VoevodskyICM1998}.
Fundamental properties imply that the motivic spectra of algebraic $K$-theory $\KGL$ and hermitian $K$-theory $\KQ$ are $(2,1)$- and $(8,4)$-periodic,
respectively, 
with respect to the standard motivic spheres $S^{p,q}:=S^{p-q}\wedge\G^{q}$ for $p\geq q$.
More precisely, 
there exist Bott elements in the Grothendieck group $\pi_{2,1}\KGL\iso K_0$ and in the Grothendieck-Witt group $\pi_{8,4}\KQ\iso GW_0$ inducing motivic weak equivalences 
\begin{equation}
\label{equation:bott}
S^{2,1}\smash \KGL \xrightarrow{\iso} \KGL 
\text{ and }
S^{8,4}\smash \KQ \xrightarrow{\iso} \KQ. 
\end{equation}
Because of \eqref{equation:bott}, 
$\KGL$ and $\KQ$ are non-connective and should be thought of as ``large'' motivic spectra. 
When using $K$-theoretic invariants to inform the homotopy sheaves of the sphere $\unit$ in \cite{rso.first}, 
it is convenient to employ smaller ``connective'' versions of the motivic $K$-theory spectra.
The geometrical meaning of this notion is still not well understood.

One fascinating aspect of motivic homotopy theory is that it offers different notions of ``connectivity'' based on: 
\begin{itemize}
\item Voevodsky's slice filtration for the localizing triangulated subcategory of effective motivic spectra $\SH^\eff\subset \SH$ generated under homotopy colimits by motivic $\PP^{1}$-suspension spectra of smooth schemes 
\cite[\S2]{voevodsky.open}.
\item Morel's homotopy $t$-structure \cite[\S6.2]{morel.connectivity} characterized over perfect fields by the vanishing of homotopy sheaves, 
and its extension to general base schemes in \cite[\S2.1]{hoyois}.
\item Spitzweck-{\O}stv{\ae}r's very effective slice filtration for the full subcategory $\SH^\veff\subset \SH$ generated under homotopy colimits and extensions by motivic $\PP^{1}$-suspension spectra of smooth schemes 
\cite[Definition 5.5]{so.twisted}.
\end{itemize}

The essential difference between the effective and the very effective slice filtrations is that the former records slices with respect to the multiplicative group scheme $\G$ and the latter with respect to 
the projective line $\PP^{1}$.
By construction $\SH^\veff$ is a subcategory of $\SH^\eff$ closed under the tensor product, 
but it is not closed under simplicial desuspension and hence not a triangulated subcategory of $\SH$.
The sphere $\unit$, 
algebraic cobordism $\MGL$, 
and quotients thereof such as motivic Moore spectra and the effective cover of $\KGL$ are examples of very effective motivic spectra.
Both of the slice filtrations interact well with $A_{\infty}$- and $E_{\infty}$-structures \cite{grso}, 
but only the very effective one maps to (the even part of) the topological Postnikov filtration under Betti realization \cite[\S3.3]{grso}.
Over perfect fields $\SH^\veff$ is the nonnegative part of the $t$-structure on $\SH^\eff$,
and the identification of the very effective slices of $\KQ$ up to extensions in \cite[Theorem 16]{bachmann.very} makes a strong case for further investigations of the very effective slice filtration.

In this paper we argue that the very effective cover $\kq$ of hermitian $K$-theory $\KQ$ is a convenient algebro-geometric generalization of the connective cover $\ko$ of real topological $K$-theory $\KO$.
The question of whether there is a motivic spectrum with similar properties as $\ko$ was first addressed in \cite{isaksen-shkembi} and \cite[Conjecture 5.8]{hill} over the fields of complex and real numbers,
respectively.

Section~\ref{sec:conn-conn-k} begins with some preliminary results on $\KGL$.
We identify the effective and very effective covers of $\KGL$ over perfect fields of characteristic $\neq 2$, 
and similarly for $\KGL/2$ and base schemes over $\Spec(\Z[\frac{1}{2}])$.
Proposition \ref{prop:wood} relates the very effective covers $\kq$ of $\KQ$ and $\kgl$ of $\KGL$ to the motivic Hopf map $\eta\colon\A^{2}\smallsetminus\{0\}\to\PP^{1}$ 
via the cofiber sequence
\begin{equation}
\label{equation:woodveryeffectivecovers}
\Sigma^{1,1}\kq 
\xrightarrow{\eta} \kq 
\rightarrow 
\kgl.
\end{equation}
Over the complex numbers, \eqref{equation:woodveryeffectivecovers} has been obtained independently by Bachmann.
The same result holds for $\eta$, $\KQ$, $\KGL$, and base schemes over $\Spec(\Z[\frac{1}{2}])$ by \cite[Theorem 3.4]{ro.hermitian}, 
but it is plainly false for the effective covers of $\KQ$ and $\KGL$ by the proof of \cite[Corollary 5.1]{rso.hlp}.
By using \eqref{equation:woodveryeffectivecovers} we identify the Betti realization of $\kq$ with $\ko$ and calculate the mod-$2$ motivic cohomology $\M\Z/2^{\star}\kq$ as 
$\mathcal{A}^{\star}//\mathcal{A}^{\star}(1)$;
the quotient of the mod-$2$ motivic Steenord algebra $\mathcal{A}^{\star}$ by the augmentation ideal of the $\M\Z/2^{\star}$-subalgebra generated by $\Sq^{1}$ and $\Sq^{2}$ 
\cite{hko.positivecharacteristic}, \cite{Voevodsky.reduced}.
By dualizing, 
the mod-$2$ motivic homology $\M\Z/2_{\star}\kq$ identifies with $\mathcal{A}_{\star}\Box_{\mathcal{A}_{\star}(1)}\M\Z/2_{\star}$ as an $\mathcal{A}_{\star}$-comodule algebra, 
and by change-of-rings the $\M\Z/2$-based Adams spectral sequence for $\kq$ takes the form
\begin{equation}
\label{equation:masskql}
\Ext^{\ast,\star}_{\mathcal{A}_{\star}(1)}(\M\Z/2_{\star},\M\Z/2_{\star})
\Rightarrow
\kq_{\star}{}^{{\kern -.5pt}\wedge}_{2,\eta}.
\end{equation}
As indicated in the notation, the filtered target groups of the spectral sequence \eqref{equation:masskql} are all $(2,\eta)$-completed.
The $\Ext$-algebra over $\mathcal{A}_{\star}(1)$ appearing in \eqref{equation:masskql} is accessible via homological algebra.
For explicit calculations with \eqref{equation:masskql}  we refer to \cite{hill} and \cite{isaksen-shkembi}.


Section \ref{sec:slice-computations} is concerned with slice calculations.
The negative slices of $\kq$ are evidently zero because $\kq$ is an effective motivic spectrum.
Over a perfect field of characteristic $\neq 2$ and  $i\geq 0$, we show in Theorem~\ref{thm:slices-kq} the calculation 
\begin{equation}
\label{equation:sliceskq}
\s_{q}\kq 
=
\begin{cases}
\Sigma^{2n,2n}\MZ/2 \vee \Sigma^{2n+2,2n}\MZ/2\vee \dotsm \vee \Sigma^{4n-2,2n}\MZ/2  \vee \Sigma^{4n,2n}\MZ  & q=2n, \\
\Sigma^{2n+1,2n+1}\MZ/2 \vee \Sigma^{2n+3,2n+1}\MZ/2 \vee \dotsm \vee \Sigma^{4n+1,2n+1}\MZ/2 & q=2n+1.
\end{cases}
\end{equation}
The slices of $\kq$ are considerable ``smaller'' than those of $\KQ$ \cite{ro.hermitian}.  
This is a helpful fact which is used in the calculation of the first stable homotopy groups of motivic spheres \cite{rso.first}. 

An immediate consequence of \eqref{equation:sliceskq} is the explicit form of the slice spectral sequence given by mod-$2$ motivic cohomology groups $h^{\star}$ and integral motivic cohomology groups $H^{\star}$
\begin{equation}
\label{equation:ssskql}
\pi_{p,w}\s_{q}\kq
=
\begin{cases}
h^{2n-p,2n-w}
\oplus\cdots\oplus h^{4n-2-p,2n-w}\oplus H^{4n-p,2n-w} & q=2n \\
h^{2n+1-p,2n+1-w}
\oplus\cdots\oplus h^{4n+1-p,2n+1-w} & q=2n+1
\end{cases}
\Rightarrow
\kq_{p,w}.
\end{equation}
In Theorem \ref{thm:diff-kq} we identify the differentials in \eqref{equation:ssskql} in terms of motivic Steenrod operations.
We also calculate the slices and the slice differentials for the $\eta$-inversion of $\kq$.

In Section \ref{sec:homotopy-computations} we identify the $0$-line of $\kq$ with the Milnor-Witt $K$-theory over fields of characteristic not $2$, 
and determine the associated graded for the groups on the $1$-line of $\kq$.
\vspace{0.01in}

Throughout the paper we employ the following assumptions and notations.
\vspace{0.05in}

\begin{tabular}{l|l}
$F$, $S$ & perfect field, finite dimensional separated noetherian scheme \\
$\Sm_{S}$ & smooth schemes of finite type over $S$ \\
$S^{s,t}$, $\Omega^{s,t}$, $\Sigma^{s,t}$ & motivic $(s,t)$-sphere, $(s,t)$-loop space, $(s,t)$-suspension  \\
$\SH$, $\SH^{\eff}$ & motivic and effective motivic stable homotopy categories of $S$\\ 
$\E$, $\unit=S^{0,0}$ & generic motivic spectrum, the motivic sphere spectrum  \\
$\Lambda$, $\mathbf{M}A$ & ring, motivic Eilenberg-MacLane spectra of a $\Lambda$-module $A$ \\
$\KGL$, $\KQ$, $\KW$ & algebraic and hermitian $K$-theory, Witt-theory \\
$\f_{q}$, $\tilde{\f}_{q}$, $\s_{q}$ & $q$th effective cover, very effective cover, and slice 
\end{tabular}
\vspace{0.05in}
\noindent

In all results concerning $\KQ$ and $\kq$ we assume that $2$ is invertible on the base scheme $S$, 
as for $\Spec(\ZZ[\tfrac{1}{2}])$, 
and following \cite{spitzweckmz} we impose the condition that 
\begin{equation}
\label{baseschemeassumption}
S\text{ is essentially smooth over a Dedekind domain.}
\end{equation}
Applications will mostly concern perfect fields of characteristic $\neq 2$.

\section{Connecting connective $K$-theories}
\label{sec:conn-conn-k}
\begin{definition}
\label{def:very-effective-cover}
Following \cite[\S5]{so.twisted} we let 
$\kq\to \KQ$ denote the very effective cover of the hermitian $K$-theory spectrum $\KQ$ of quadratic forms \cite{hornbostel.hermitian} 
and let $\kgl\to \KGL$ denote the very effective cover of the algebraic $K$-theory spectrum $\KGL$ \cite{VoevodskyICM1998}. 
\end{definition}

\begin{remark}
\label{rem:very-perfect-field}
Following \cite{bachmann.very}, \cite{so.twisted}, and working over $F$ we can identify the very effective cover $\tilde{\f}_{q}\E$ of $\E$ with $\f_0(\E_{\geq 0})$, 
the effective cover of the connective cover $\E_{\geq 0}$ of $\E$ with respect to the homotopy $t$-structure on $\SH$ \cite{morel.connectivity}. 
\end{remark}

\begin{lemma}
\label{lem:kgl-complex-realization}
If $F$ admits a complex embedding, 
the Betti realization of $\kgl$ coincides with the connective cover $\ku$ of the complex topological $K$-theory spectrum $\KU$ in the topological stable homotopy category.
\end{lemma}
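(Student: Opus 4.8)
The plan is to transport the slice tower of $\kgl$ through complex Betti realization and compare it with the Postnikov tower of $\ku$. Fix a complex embedding $F\hookrightarrow\CC$ and let $\mathrm{Re}_{\CC}\colon\SH(F)\to\Ho(\Spt)$ denote the resulting complex Betti realization functor into the topological stable homotopy category (base change to $\CC$ followed by taking complex points). I will use three standard facts: $\mathrm{Re}_{\CC}$ is exact and symmetric monoidal with $\mathrm{Re}_{\CC}(\G)\simeq S^{1}$, whence $\mathrm{Re}_{\CC}(S^{2,1})\simeq S^{2}$; it satisfies $\mathrm{Re}_{\CC}(\MZ)\simeq H\ZZ$ and $\mathrm{Re}_{\CC}(\KGL)\simeq\KU$; and it carries the slice tower $\{\f_{q}\KGL\}_{q}$ to the (even) Postnikov tower of $\KU$, so that the very effective slice filtration realizes to the even topological Postnikov filtration (see \cite[\S3.3]{grso} and Levine's slice computations).

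First I would observe that $\mathrm{Re}_{\CC}$ sends very effective spectra to connective spectra: $\SH^{\veff}(F)$ is generated under homotopy colimits and extensions by the $\Sigma^{2a,a}\Sigma^{\infty}_{\PP^{1}}X_{+}$ with $a\geq 0$ and $X\in\Sm_{F}$, these realize to the connective spectra $\Sigma^{2a}\Sigma^{\infty}X(\CC)_{+}$, and connective spectra are closed under homotopy colimits and extensions. In particular $\mathrm{Re}_{\CC}(\kgl)$ is connective, so the realization $\mathrm{Re}_{\CC}(\kgl)\to\mathrm{Re}_{\CC}(\KGL)\simeq\KU$ of the very effective cover map factors uniquely through the connective cover $\ku\to\KU$, producing $\phi\colon\mathrm{Re}_{\CC}(\kgl)\to\ku$. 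It remains to prove that $\phi$ is an equivalence.

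For this I would use the identification of $\kgl$ with the effective cover $\f_{0}\KGL$ (from the preliminary results of this section, or deduced as above from the fact that $\f_{0}\KGL$ has vanishing negative homotopy sheaves and is therefore very effective). Then $\mathrm{Re}_{\CC}(\kgl)=\mathrm{Re}_{\CC}(\f_{0}\KGL)$, the slice tower of $\kgl$ is the nonnegative part of the slice tower of $\KGL$, and by the third fact its realization is the tower $\{\tau_{\geq 2q}\ku\}_{q\geq 0}$, i.e. the Postnikov tower of the connective spectrum $\ku$, which converges to $\ku$. One checks the resulting equivalence $\mathrm{Re}_{\CC}(\kgl)\simeq\ku$ is $\phi$ by comparing the two maps to $\KU$ on each Postnikov stage. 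Equivalently, without invoking the $\KGL$-to-$\KU$ tower comparison wholesale: the realized slice tower of $\kgl$ has $q$-th layer $\mathrm{Re}_{\CC}(\s_{q}\kgl)=\mathrm{Re}_{\CC}(\Sigma^{2q,q}\MZ)=\Sigma^{2q}H\ZZ$; since $\mathrm{Re}_{\CC}(\f_{q}\kgl)$ is $2q$-connective, the map $\mathrm{Re}_{\CC}(\f_{q}\kgl)\to\KU$ factors through $\tau_{\geq 2q}\KU$, so $\phi$ is a map from the realized slice filtration to the even Postnikov filtration of $\ku$ which is an equivalence on associated graded pieces; both filtrations being complete, $\phi$ is an equivalence.

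The main obstacle is the interaction of $\mathrm{Re}_{\CC}$ with the inverse limit defining the slice tower: one must know that the realized slice filtration of $\kgl$ is complete, which follows from the connectivity estimate that $\mathrm{Re}_{\CC}$ carries $q$-connective motivic spectra (for Morel's homotopy $t$-structure) to $q$-connective spectra, applied to $\f_{q}\kgl$. Everything else — connectivity of realizations of very effective spectra, the values of $\mathrm{Re}_{\CC}$ on $\MZ$, $\KGL$ and the Tate twist, and the identification $\kgl\simeq\f_{0}\KGL$ — is standard or recorded above. As a fallback one can bypass tower completeness altogether by running the slice spectral sequence for $\mathrm{Re}_{\CC}(\kgl)$: its $E_{2}$-term is $H\ZZ$ concentrated in even total degrees, so it collapses and yields $\pi_{\ast}\mathrm{Re}_{\CC}(\kgl)\cong\ZZ[\beta]$ with $\lvert\beta\rvert=2$, compatibly with $\pi_{\ast}\ku$, so that $\phi$ is a $\pi_{\ast}$-isomorphism of connective spectra and hence an equivalence.
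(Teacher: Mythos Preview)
Your proof is correct but takes a different route from the paper's. The paper's argument is three lines: by \cite[Proposition~5.12]{so.twisted}, $\kgl$ is the homotopy quotient $\MGL/(x_2,x_3,\dotsc)$ under the orientation (Todd genus) map, and likewise $\ku=\MU/(x_2,x_3,\dotsc)$; since Betti realization preserves homotopy colimits and sends $\MGL$ to $\MU$ (and the generators $x_i$ to the generators $x_i$), it carries the first quotient to the second. Your approach instead produces a comparison map $\phi\colon\mathrm{Re}_{\CC}(\kgl)\to\ku$ from connectivity of very effective realizations, and then argues $\phi$ is an equivalence by transporting the slice tower and comparing with the even Postnikov tower of $\ku$. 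This trades the single structural input from \cite{so.twisted} for the identification $\kgl\simeq\f_{0}\KGL$ (Lemma~\ref{lem:kgl-effective-perfect} here, which appears after the present lemma but is logically independent of it) together with a connectivity/completeness argument; the payoff is that your method generalizes more readily to other effective covers whose $\MGL$-module description is not already in hand. One small correction: your ``fallback'' does not actually bypass tower completeness --- convergence of the realized slice spectral sequence to $\pi_{\ast}\mathrm{Re}_{\CC}(\kgl)$ is exactly the statement that $\holim_{q}\mathrm{Re}_{\CC}(\f_{q}\kgl)\simeq\ast$, which you had already secured via increasing connectivity; what the fallback genuinely avoids is the layer-by-layer comparison with the Postnikov filtration of $\ku$, replacing it by a direct $\pi_{\ast}$-computation and the observation that $\phi$ hits the Bott class.
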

\begin{proof}
Recall from \cite[Proposition 5.12]{so.twisted} that $\kgl$ is a homotopy quotient of $\MGL$ under the orientation or Todd genus map, 
and similarly but easier that $\ku$ is a homotopy quotient of $\MU$.
The Betti realization functor \cite[Appendix A]{PPR} preserves homotopy colimits, and sends $\MGL$ to $\MU$.
\end{proof}

\begin{lemma}
\label{lem:kgl-effective-perfect}
Over $F$ the effective and very effective covers of $\KGL$ coincide in $\SH$. 
\end{lemma}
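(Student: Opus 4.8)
The plan is to show that the effective cover $\f_{0}\KGL$ is connective for Morel's homotopy $t$-structure, and then to read off from Remark~\ref{rem:very-perfect-field} that it already coincides with its very effective cover $\kgl=\tilde{\f}_{0}\KGL$. Concretely, I would work with the chain
\[
\kgl \;=\; \tilde{\f}_{0}\KGL \;\iso\; \tilde{\f}_{0}\bigl(\f_{0}\KGL\bigr) \;=\; \f_{0}\bigl((\f_{0}\KGL)_{\geq 0}\bigr) \;=\; \f_{0}\bigl(\f_{0}\KGL\bigr) \;=\; \f_{0}\KGL .
\]
Here the first isomorphism holds because the canonical map $\f_{0}\KGL\to\KGL$ induces an isomorphism on $[\E,-]$ for every effective — hence every very effective — motivic spectrum $\E$, so it becomes invertible after applying $\tilde{\f}_{0}$; the middle equality is Remark~\ref{rem:very-perfect-field} applied to $\f_{0}\KGL$; the penultimate equality is the claim $\f_{0}\KGL\in\SH_{\geq 0}$ that remains to be proved; and the last is the idempotence of the effective cover. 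Thus the lemma reduces to showing that connective algebraic $K$-theory $\f_{0}\KGL$ lies in the non-negative part of the homotopy $t$-structure, equivalently that $\underline{\pi}_{n}(\f_{0}\KGL)=0$ for all $n<0$.

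To prove this I would pass to slices. By the known computation of $\s_{q}\KGL$ one has $\s_{q}\KGL\iso\Sigma^{2q,q}\MZ$ for every $q$, so $\s_{q}\f_{0}\KGL=\Sigma^{2q,q}\MZ$ for $q\geq 0$ and $\s_{q}\f_{0}\KGL=0$ otherwise. Since $\MZ$ is connective for the homotopy $t$-structure and smashing with $S^{2,1}$ raises connectivity by one, $\Sigma^{2q,q}\MZ$ is $q$-connective for $q\geq 0$; hence every finite stage $\f_{0}\KGL/\f_{q}\KGL$ of the slice tower, being an iterated extension of $\s_{0}\KGL,\dots,\s_{q-1}\KGL$, is connective. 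The slice tower of $\f_{0}\KGL$ converges over the perfect field $F$, so $\f_{0}\KGL\simeq\holim_{q}\f_{0}\KGL/\f_{q}\KGL$. For $n<0$ the $\varprojlim$-term of the Milnor sequence for the homotopy sheaves of this homotopy limit vanishes, and its $\varprojlim^{1}$-term is $\varprojlim^{1}_{q}\underline{\pi}_{0}(\f_{0}\KGL/\f_{q}\KGL)$ in the sole relevant case $n=-1$; there the transition maps $\f_{0}\KGL/\f_{q+1}\KGL\to\f_{0}\KGL/\f_{q}\KGL$ have fibre the $q$-connective slice $\s_{q}\KGL=\Sigma^{2q,q}\MZ$, hence are isomorphisms on $\underline{\pi}_{0}$ for $q\geq 1$. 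By Mittag--Leffler this $\varprojlim^{1}$ vanishes, so $\underline{\pi}_{n}(\f_{0}\KGL)=0$ for $n<0$, as required.

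I expect the hard part to be exactly this last step: passing from connectivity of the slices to connectivity of $\f_{0}\KGL$ itself requires the convergence of the slice filtration on connective algebraic $K$-theory over $F$ together with the $\varprojlim^{1}$-bookkeeping, both of which are standard over perfect fields, so in a final write-up this amounts to citing the relevant convergence statement and recording the short Mittag--Leffler observation. Alternatively one can present $\f_{0}\KGL$ as a quotient of the very effective spectrum $\MGL$ via the Todd genus — which is a split epimorphism on every slice, so that its cofibre has connective slices supported in non-negative slice degree — and invoke closure of $\SH^{\veff}$ under extensions; but checking that this cofibre is connective still rests on the same convergence input.
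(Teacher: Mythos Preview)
Your reduction is the same as the paper's: both arguments come down to showing that $\f_{0}\KGL$ is connective for the homotopy $t$-structure, and then invoking (what you call) Remark~\ref{rem:very-perfect-field}. The chain of identifications you write out is correct.

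Where you diverge from the paper is in how you establish connectivity. The paper does this in one line: for smooth $X$ over $F$ and $s<0$ one has $[\Sigma^{s,0}X_{+},\f_{0}\KGL]=K_{s}(X)=0$, since negative algebraic $K$-theory vanishes on regular schemes; then \cite[Proposition~4]{bachmann.very} translates this vanishing into $\f_{0}\KGL\in\SH_{\geq 0}$, and \cite[Lemma~10]{bachmann.very} finishes. No slices, no convergence, no $\varprojlim^{1}$. (In characteristic zero the paper instead cites the presentation of $\f_{0}\KGL$ as a quotient of $\MGL$.)

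Your route through the slice tower is correct as written, but it is strictly heavier: it consumes the Levine--Voevodsky computation $\s_{q}\KGL\simeq\Sigma^{2q,q}\MZ$, the connectivity of $\MZ$, and---the point you rightly flag---slice completeness of $\f_{0}\KGL$. That last ingredient is where the circularity creeps in: the standard proof that $\holim_{q}\f_{q}\KGL\simeq\ast$ over a perfect field shows $\f_{q}\KGL$ is $q$-connected by computing $[\Sigma^{s,t}X_{+},\f_{q}\KGL]=K_{s-2t}(X)$ for $t\geq q$ and invoking the \emph{same} vanishing of negative $K$-theory the paper uses directly (cf.\ the discussion preceding Lemma~\ref{lem:kglmod2-effective}). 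So your ``hard step'' unpacks to the paper's one-line argument plus extra bookkeeping. The Mittag--Leffler observation for $\underline{\pi}_{0}$ is fine, but unnecessary once you know $\f_{q}\KGL$ is $q$-connected. In a final write-up I would recommend replacing the slice argument by the direct $K$-theory vanishing; your alternative via $\MGL$ is closer in spirit to what the paper does in characteristic zero.
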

\begin{proof}
When $\Char(F)=0$ this is shown in \cite[Corollary 5.13]{so.twisted} by writing the effective cover of $\KGL$ as a homotopy quotient of $\MGL$ 
(the latter is very effective over any base scheme \cite[Theorem 5.7]{so.twisted}). 
If $\Char(F)>0$ we follow the proof of \cite[Theorem 16]{bachmann.very} where the effective cover $\f_{0}\KGL \to \KGL$ is shown to be connective.
For $t\geq 0$ the presheaf on $\Sm_F$
\[ 
X\mapsto [\Sigma^{s,t}X_+,\f_{0}\KGL] = K_{s-2t}(X) 
\]
is zero for $s<2t$, 
e.g., 
for $s-t<0$ (this holds if $X$ is regular, hence over any regular base scheme $S$).
The case $t=0$ implies by \cite[Proposition 4]{bachmann.very} that $\f_{0}\KGL$ is connective, and by \cite[Lemma 10]{bachmann.very} that $\f_{0}\KGL$ is the very effective cover.
\end{proof}
\begin{remark}
Lemma \ref{lem:kgl-effective-perfect} holds more generally for motivic Landweber exact spectra in the sense of \cite{NSO}.
\end{remark}

Over a noetherian scheme $S$ of finite Krull dimension $d$, 
the presheaf on $\Sm_S$
\[ 
X\mapsto [\Sigma^{s,t}X_+,\KGL] = KH_{s-2t}(X) 
\]
is zero for $s-2t <-d$ by \cite{kerz-strunk}, since
$\KGL$ represents homotopy $K$-theory over $S$ \cite{cisinski.homotopyk}.
Thus for $t\geq q$, the presheaf 
\[ 
X\mapsto [\Sigma^{s,t}X_+,\f_{q}\KGL] = K_{s-2t}(X) 
\]
is zero for $s-t+d<q$, and $\f_{q}\KGL$ is $q$-connected in the sense of \cite[Definition 3.16]{rso.first}.
If the very effective slice filtration coincides with the combination of the homotopy $t$-structure and the effective slice filtration over $S$, 
then $\f_{0}\KGL$ is the very effective cover, 
i.e., 
the effective and very effective slices of $\KGL$ agree. 
We can argue differently for $\KGL/2$ when $2$ is invertible (this proof can also be adapted to motivic Landweber exact spectra).

\begin{lemma}
\label{lem:kglmod2-effective}
Over a base scheme $S$ as in \eqref{baseschemeassumption} on which $2$ is invertible,
the effective and very effective covers of $\KGL/2$ coincide in $\SH$. 
\end{lemma}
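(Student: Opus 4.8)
The plan is to reduce the statement to a connectivity assertion about the effective cover of $\KGL/2$ and then to verify that assertion by a direct $K$-theoretic computation exploiting the regularity of the base.

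First I would note that $\f_{0}\colon\SH\to\SH^{\eff}$ is a triangulated functor, being the colocalization of $\SH$ onto the triangulated subcategory $\SH^{\eff}$. Applying it to the cofiber sequence $\KGL\xrightarrow{2}\KGL\to\KGL/2$ therefore produces a cofiber sequence $\f_{0}\KGL\xrightarrow{2}\f_{0}\KGL\to\f_{0}(\KGL/2)$. Because $\SH^{\veff}\subset\SH^{\eff}$, the very effective cover of $\KGL/2$ factors through its effective cover $\f_{0}(\KGL/2)$ and is in fact the very effective cover of $\f_{0}(\KGL/2)$, so it suffices to prove that $\f_{0}(\KGL/2)$ is itself very effective. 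By the connectivity criterion of \cite[Proposition 4 and Lemma 10]{bachmann.very}, in the form over $S$ afforded by the notion of $q$-connectedness of \cite[Definition 3.16]{rso.first}, this amounts to showing that $\f_{0}(\KGL/2)$ is $(-1)$-connected, i.e. that the presheaf $X\mapsto[\Sigma^{s,0}X_{+},\f_{0}(\KGL/2)]$ on $\Sm_{S}$ vanishes for $s<0$.

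To obtain this vanishing I would invoke the hypotheses on $S$. Since $S$ is essentially smooth over a Dedekind domain it is regular, so every $X\in\Sm_{S}$ is regular and hence $K_{s}(X)=0$ for $s<0$ by the vanishing of negative $K$-theory of regular schemes. Using that $\f_{0}\KGL\to\KGL$ induces isomorphisms on $[\Sigma^{s,t}X_{+},-]$ for $t\geq0$, so that $[\Sigma^{s,0}X_{+},\f_{0}\KGL]\cong K_{s}(X)$, the long exact sequence associated to the cofiber sequence above yields an exact sequence $K_{s}(X)\xrightarrow{2}K_{s}(X)\to[\Sigma^{s,0}X_{+},\f_{0}(\KGL/2)]\to K_{s-1}(X)$, whose middle term therefore vanishes for $s<0$. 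The same computation with an arbitrary Tate twist shows, more generally, that $\f_{q}(\KGL/2)$ is $q$-connected for every $q$.

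The hard part will be the passage from the perfect-field case to the base $S$. Over a perfect field the implication ``$(-1)$-connected and effective $\Rightarrow$ very effective'' is immediate, since there $\SH^{\veff}$ is the nonnegative part of the homotopy $t$-structure on $\SH^{\eff}$ (compare Remark~\ref{rem:very-perfect-field} and the proof of Lemma~\ref{lem:kgl-effective-perfect}); over a general base the comparison between the homotopy $t$-structure and the very effective slice filtration is exactly what one wishes to avoid assuming. Making the deduction unconditional over $S$ is what uses $2$-invertibility: with $2$ invertible and $S$ regular the presheaves attached to $\f_{0}(\KGL/2)$ are genuinely bounded below and of finite $2$-torsion type, so the connectivity above can be combined with a cellular, finite-type descent along the points of $S$ (whose residue fields are handled by Lemma~\ref{lem:kgl-effective-perfect}); alternatively one exhibits $\f_{0}(\KGL/2)$ directly as a homotopy colimit of very effective spectra built from the cells $\Sigma^{2j,j}\unit/2$ for $j\geq0$ via a connective Snaith tower. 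The argument is insensitive to the particular choice of $\KGL$ and goes through for any motivic Landweber exact spectrum in the sense of \cite{NSO}.
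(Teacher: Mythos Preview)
Your reduction is correct: it suffices to show that $\f_{0}(\KGL/2)$ is very effective, and your computation that the presheaves $X\mapsto[\Sigma^{s,0}X_{+},\f_{0}(\KGL/2)]$ vanish for $s<0$ is fine. The problem is the step you yourself flag as the hard part. The implication ``effective and $(-1)$-connected $\Rightarrow$ very effective'' is, as you say, a statement about the comparison between the homotopy $t$-structure and the very effective filtration on $\SH^{\eff}$, and over a general base $S$ this is precisely what is not known. The paper makes this point explicitly in the paragraph preceding the lemma: the connectivity argument would finish the proof \emph{if} that comparison held, and the lemma is there exactly because one has to ``argue differently''. Your proposed workarounds---descent to residue fields, or a cellular Snaith tower---are not proofs as written: very effectiveness is not known to be detected on points, and the Snaith-type cell structure would itself need the identification you are trying to prove.

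The paper's route avoids the connectivity criterion entirely. It constructs a map
\[
\psi\colon \MGL/(2,x_{2},x_{3},\dotsc)\longrightarrow \f_{0}(\KGL/2)
\]
from the orientation, shows it is an isomorphism on all slices (via \cite[Theorem 11.3]{spitzweckmz} and \cite[Proposition 5.4]{spitzweck.relations}), and then shows both sides are slice complete: the target because $\f_{q}\KGL$ is $q$-connected by \cite{kerz-strunk}, the source by the description of $\f_{q}\MGL$ in \cite{spitzweck.relations}. Hence $\psi$ is an equivalence, and since $\MGL$ is very effective over any base, so is the iterated quotient on the left; this forces $\f_{0}(\KGL/2)$ to be very effective. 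Your ``alternative'' in the last sentence is in spirit this argument, but the substance---the identification with an explicit $\MGL$-quotient and the slice-completeness check---is what is missing.
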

\begin{proof}
We claim $\KGL/2$ affords the description as a homotopy quotient of $\MGL/2$ for the generators of the Lazard ring $x_i\in\pi_{2i,i}\MGL$. 
Since $\MGL$ is effective the orientation map for $\KGL$ factors through 
$$
{\phi}\colon\MGL\to \f_{0}\KGL.
$$ 
For $i\geq 2$ we have $\pi_{2i,i}\phi(x_i)=0$, so that ${\phi}$ admits a factorization 
$$
\MGL/(x_2,x_3,\dotsc) \to \f_{0}\KGL.
$$ 
We claim there is a canonically induced motivic weak equivalence 
\[ 
\psi\colon \MGL/(2,x_2,x_3,\dotsc) \overset{\iso}{\to} \f_{0}\KGL/2.
\] 
The map $\psi$ yields an isomorphism on slices by \cite[Theorem 11.3]{spitzweckmz} and an appropriate adaption of \cite[Proposition 5.4]{spitzweck.relations}.
We show that $\holim_{q\to \infty} \f_{q} \psi$ is a map between contractible motivic spectra,
i.e., 
$\psi$ is a map between slice complete spectra. 
For $\KGL/2$ this follows by the argument prior to Lemma~\ref{lem:kglmod2-effective}:
By \cite{kerz-strunk} we know $\f_{q}\KGL$ is $q$-connected in the sense of \cite[Definition 3.16]{rso.first}.
Thus $\holim_{q\to \infty} \f_{q}\KGL \iso \ast$, and likewise for $\f_{0}\KGL/2$. 
The contractibility of $\holim_{q\to \infty}\f_{q}\MGL/(2,x_2,x_3,\dotsc)$ follows from the description of the covers $\f_{q}\MGL$ in the proof of \cite[Theorem 4.6]{spitzweck.relations}. 
To conclude for $\psi$ we use that slices detect motivic weak equivalences between slice complete motivic spectra, 
cf.~\cite[\S8.3]{hoyois}.
Recall that $\MGL$ is a very effective motivic spectrum \cite[Theorem 5.7]{so.twisted}.
The lemma follows from the canonically induced motivic weak equivalences in the commutative diagram
\[ 
\xymatrix{ 
\tilde{\f}_{0}\MGL/(2,x_2,x_3,\dotsc) \ar[r]^-{\iso} \ar[d]_-{\iso} &
\tilde{\f}_{0}\f_{0}\KGL/2 \ar[r]^-{\iso} \ar[d] &
\tilde{\f}_{0}\KGL/2 \ar[d]  \\
{\f}_{0}\MGL/(2,x_2,x_3,\dotsc) \ar[r]^-{\iso} &
\f_{0}\f_{0}\KGL/2 \ar[r]^-{\iso} &
\f_{0}\KGL/2.
}
\]
\end{proof}

The Bott element $\PP^1\to \KGL$ lifts canonically to a map $\beta\colon \PP^1\to \kgl$ because $\PP^1$ is very effective. 
Let $\gamma$ denote the canonical composite
\[ 
\kgl \to \f_{0}(\KGL)\to \s_{0}\KGL.
\]
\begin{proposition}
\label{prop:kgl-bott}
Over $F$ multiplication with the Bott element induces the cofiber sequence
\[ 
\Sigma^{2,1}\kgl 
\xrightarrow{\beta} 
\kgl 
\xrightarrow{\gamma} 
\MZ 
\xrightarrow{\delta} \Sigma^{3,1}\kgl. 
\]
\end{proposition}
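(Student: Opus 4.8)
The plan is to recognize the asserted triangle as the defining cofiber sequence for the zeroth slice of $\KGL$, transported along Bott periodicity.

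First recall that $\Sigma^{2,1}$ carries the category $\Sigma^{2q,q}\SH^{\eff}$ of $q$-effective spectra equivalently onto $\Sigma^{2(q+1),q+1}\SH^{\eff}$, compatibly with the inclusions into $\SH$; hence the effective-cover functors satisfy $\f_{q+1}\circ\Sigma^{2,1}\iso\Sigma^{2,1}\circ\f_{q}$, cf.\ \cite{voevodsky.open}. Applying $\f_{1}$ to the Bott periodicity equivalence $S^{2,1}\smash\KGL\xrightarrow{\iso}\KGL$ of \eqref{equation:bott} therefore yields an equivalence $\Sigma^{2,1}\f_{0}\KGL\xrightarrow{\iso}\f_{1}\KGL$, which over $F$ reads $\Sigma^{2,1}\kgl\xrightarrow{\iso}\f_{1}\KGL$ by Lemma~\ref{lem:kgl-effective-perfect}. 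Using naturality of the effective covers applied to multiplication by the Bott element $\PP^{1}\to\KGL$, together with the universal property of the effective cover $\f_{0}\KGL\to\KGL$, one identifies the composite $\Sigma^{2,1}\kgl\xrightarrow{\iso}\f_{1}\KGL\to\f_{0}\KGL\iso\kgl$ with $\beta$: both cover the self-map ``multiply by the Bott element'' of $\KGL$.

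Now feed this into the defining cofiber sequence $\f_{1}\KGL\to\f_{0}\KGL\to\s_{0}\KGL$ of the zeroth slice. Since the zeroth slice of algebraic $K$-theory is integral motivic cohomology, $\s_{0}\KGL\iso\MZ$ (by Levine, and over the base schemes considered here by \cite{spitzweckmz}), we obtain the cofiber sequence $\Sigma^{2,1}\kgl\xrightarrow{\beta}\kgl\to\MZ$, in which the second map is the slice projection $\kgl\iso\f_{0}\KGL\to\s_{0}\KGL$, that is, exactly $\gamma$. Rotating this cofiber sequence produces the connecting map $\delta\colon\MZ\to\Sigma^{1,0}\Sigma^{2,1}\kgl=\Sigma^{3,1}\kgl$ and completes the triangle. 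Alternatively one can present $\kgl$ as the homotopy quotient $\MGL/(x_{2},x_{3},\dotsc)$ via \cite[Proposition 5.12]{so.twisted}; the Bott element is then the image of $x_{1}\in\pi_{2,1}\MGL$ under the orientation, so the cofiber of $\beta$ is $\MGL/(x_{1},x_{2},\dotsc)\iso\MZ$ by the Hopkins--Morel--Hoyois isomorphism.

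The one genuinely delicate point, in either approach, is the bookkeeping that matches the three canonical maps in play -- multiplication by $\beta$ on $\kgl$, the slice projection $\gamma$, and the first map $\f_{1}\KGL\to\f_{0}\KGL$ of the effective slice tower of $\KGL$. All three arise from the ring structure and the universal properties of the covers, so this identification is formal though it requires care; the substantive input $\s_{0}\KGL\iso\MZ$ is an established theorem, and the rest of the argument is a manipulation of cofiber sequences.
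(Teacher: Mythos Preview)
Your argument is correct and follows essentially the same strategy as the paper: identify multiplication by $\beta$ on $\kgl$ with the structure map $\f_{1}\to\f_{0}$ of the slice tower of $\KGL$, then read off the cofiber as the zeroth slice $\s_{0}\KGL\iso\MZ$. The only packaging difference is that the paper works with the very effective description $\kgl=\f_{0}(\KGL_{\geq 0})$ and cites Bachmann's cofiber sequence $\f_{1}(\KGL_{\geq 1})\to\f_{0}(\KGL_{\geq 0})\to\MZ$ for the very effective zero slice, whereas you invoke Lemma~\ref{lem:kgl-effective-perfect} at the outset to set $\kgl=\f_{0}\KGL$ and use the ordinary effective slice cofiber sequence together with Levine--Voevodsky's $\s_{0}\KGL\iso\MZ$; over $F$ these two routes coincide precisely by Lemma~\ref{lem:kgl-effective-perfect}. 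Your alternative via $\kgl\simeq\MGL/(x_{2},x_{3},\dotsc)$ and Hopkins--Morel--Hoyois is a genuinely independent second proof not present in the paper.
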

\begin{proof}
By Lemma \ref{lem:kgl-effective-perfect} we have $\f_{0}(\KGL_{\ge 0})\iso\kgl$ and by $(2,1)$-periodicity $\f_{-1}(\KGL_{\ge -1})\iso\Sigma^{-2,-1}\kgl$.
Our claim follows from the commutative diagram
\[ 
\xymatrix{ 
\Sigma^{2,1}\f_0(\KGL_{\geq 0}) \ar[r]^-\iso \ar[d]_-\beta &
\f_{1} (\Sigma^{2,1} \KGL_{\geq 0}) \ar[r]^-\iso & 
\f_{1} ((\Sigma^{2,1} \KGL)_{\geq 1})\ar[r]^-\iso  & 
\f_{1} (\KGL_{\geq 1}) \ar[d]^-{\beta^\prime} \\
\f_0(\KGL_{\geq 0})  \ar[rrr]^\id &&& \f_0(\KGL_{\geq 0}) }
\]
and the cofiber sequence for the very effective zero slice of $\KGL$ \cite[Lemma 7]{bachmann.very}
\[ 
\f_{1}(\KGL_{\geq 1}) \xrightarrow{\beta^\prime} \f_0(\KGL_{\geq 0}) \to \MZ,
\]
which coincides with the usual zero slice $\s_{0}\KGL\iso\MZ$ computed in \cite{levine.coniveau}, \cite{Voevodsky:motivicss}.
\end{proof}

\begin{proposition}
\label{prop:kglmod2-bott}
Over a base scheme $S$ as in \eqref{baseschemeassumption} on which $2$ is invertible, 
multiplication with the Bott element induces the cofiber sequence
\[ 
\Sigma^{2,1}\kgl/2 
\xrightarrow{\beta} 
\kgl/2 \xrightarrow{\gamma} 
\MZ/2 \xrightarrow{\delta} 
\Sigma^{3,1}\kgl/2. 
\]
\end{proposition}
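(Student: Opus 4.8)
The plan is to run the argument of Proposition~\ref{prop:kgl-bott} with two modifications: work with the effective slice filtration of $\KGL/2$ rather than with Morel's homotopy $t$-structure (which is unavailable over a general $S$), and take as input the zero slice $\s_{0}\KGL\iso\MZ$ over $S$ (see \cite[Theorem 11.3]{spitzweckmz} and \cite{spitzweck.relations}) in place of Bachmann's very effective zero-slice cofiber sequence. The field-specific identification $\f_{0}(\KGL_{\ge 0})\iso\kgl$ is then replaced by Lemma~\ref{lem:kglmod2-effective}, which asserts $\f_{0}(\KGL/2)\iso\tilde{\f}_{0}(\KGL/2)=\kgl/2$ over $S$.

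Two formal observations set this up. First, the $q$th effective cover functor $\f_{q}$ is exact, hence commutes with the cofiber sequence defining the mod-$2$ reduction; thus $\f_{q}(\KGL/2)\iso(\f_{q}\KGL)/2$, and likewise for the slices, so that $\s_{0}(\KGL/2)\iso\MZ/2$. Second, smashing with $S^{2,1}$ shifts the effective slice filtration up by one, so $\Sigma^{2,1}\f_{q}\E\iso\f_{q+1}(\Sigma^{2,1}\E)$ for every motivic spectrum $\E$; combined with the periodicity equivalence $S^{2,1}\smash\KGL\xrightarrow{\iso}\KGL$ of \eqref{equation:bott} and reduced mod $2$, this gives
\[
\Sigma^{2,1}\f_{q}(\KGL/2)\xrightarrow{\iso}\f_{q+1}(\KGL/2).
\]
Now the defining cofiber sequence of the zero slice of $\KGL/2$ reads $\f_{1}(\KGL/2)\xrightarrow{\can}\f_{0}(\KGL/2)\to\s_{0}(\KGL/2)\iso\MZ/2$. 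Precomposing $\can$ with the case $q=0$ of the displayed equivalence, the commutative diagram
\[
\xymatrix{
\Sigma^{2,1}\f_{0}(\KGL/2)\ar[r]^-{\iso}\ar[d]_-{\beta} &
\f_{1}(\Sigma^{2,1}\KGL/2)\ar[r]^-{\iso} &
\f_{1}(\KGL/2)\ar[d]^-{\can} \\
\f_{0}(\KGL/2)\ar[rr]^-{\id} & & \f_{0}(\KGL/2)
}
\]
(the exact mod-$2$ analogue of the diagram in the proof of Proposition~\ref{prop:kgl-bott}) identifies the left vertical map with multiplication by the Bott element. Replacing $\f_{0}(\KGL/2)$ by $\kgl/2$ via Lemma~\ref{lem:kglmod2-effective} and rotating the resulting triangle yields the asserted cofiber sequence, with $\gamma$ the canonical composite $\kgl/2\to\s_{0}\KGL/2\iso\MZ/2$ and $\delta$ the connecting map.

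The main obstacle is the verification that the canonical inclusion $\f_{1}(\KGL/2)\to\f_{0}(\KGL/2)$, transported along the periodicity equivalence, coincides with multiplication by $\beta\colon\PP^{1}\to\kgl\to\kgl/2$; this is exactly the content of the commutative diagram above. The key point is that, unlike the homotopy-$t$-structure argument used over a perfect field, this diagram invokes nothing beyond the exactness and functoriality of the effective covers $\f_{q}$ and the periodicity equivalence \eqref{equation:bott}, all of which are available over any base scheme $S$ satisfying \eqref{baseschemeassumption}; the only genuinely input-level fact one needs over such $S$ is the computation $\s_{0}\KGL\iso\MZ$.
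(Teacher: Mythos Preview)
Your argument is correct and is precisely the content that the paper's one-line proof (``This follows from Lemma~\ref{lem:kglmod2-effective}'') leaves implicit: once Lemma~\ref{lem:kglmod2-effective} identifies $\kgl/2$ with $\f_{0}(\KGL/2)$, the ordinary zero-slice cofiber sequence $\f_{1}(\KGL/2)\to\f_{0}(\KGL/2)\to\s_{0}(\KGL/2)\iso\MZ/2$ together with Bott periodicity gives the result. You have also correctly observed that this effective-filtration route is the right one to take over a general base $S$, since the homotopy $t$-structure reformulation used in Proposition~\ref{prop:kgl-bott} is not directly available there.
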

\begin{proof}
This follows from Lemma~\ref{lem:kglmod2-effective}.
\end{proof}


\begin{lemma}
\label{lem:kgl-bott}
If $2$ is invertible on a base scheme $S$ as in \eqref{baseschemeassumption}, 
then the composite
\[ 
\MZ\smash \MZ/2 \xrightarrow{\delta\smash \MZ/2} 
\Sigma^{3,1}\kgl\smash \MZ/2 \xrightarrow{\gamma \smash \MZ/2} 
\Sigma^{3,1}\MZ\smash \MZ/2 
\]
is given by multiplication with the first Milnor operation 
$$
\mathsf{Q}_{1}=\Sq^{1}\Sq^{2}+\Sq^{2}\Sq^{1}\colon \MZ/2\to \Sigma^{3,1}\MZ/2.
$$
\end{lemma}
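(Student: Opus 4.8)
The plan is to recognise the composite --- call it $\Theta$ --- as a stable mod-$2$ cohomology operation $\MZ/2\to\Sigma^{3,1}\MZ/2$, that is, an element of the motivic Steenrod algebra $\mathcal{A}^{3,1}=[\MZ/2,\Sigma^{3,1}\MZ/2]$, and then to pin down its two coefficients. First I would record that $\mathcal{A}^{3,1}$ is free of rank two on $\Sq^{3}=\Sq^{1}\Sq^{2}$ and $\Sq^{2}\Sq^{1}$: by the basis theorem for the motivic Steenrod algebra and its extensions \cite{Voevodsky.reduced}, \cite{hko.positivecharacteristic}, \cite{spitzweckmz} the admissible monomials form an $H^{\star\star}$-basis, and a weight count shows that the only admissibles in bidegree $(3,1)$ are $\Sq^{3}$ and $\Sq^{2}\Sq^{1}$, with no $H^{\star\star}$-multiple of a lower admissible contributing (the pertinent coefficient groups $H^{1,0}$, $H^{2,0}$, $H^{3,0}$, $H^{2,1}$, $H^{3,1}$ of the base vanish --- over any field, over $\Spec\Z[\tfrac12]$, and over $\Spec\CC$). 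Thus $\Theta=a\,\Sq^{1}\Sq^{2}+b\,\Sq^{2}\Sq^{1}$ with $a,b\in\Z/2$, and the assertion is precisely that $a=b=1$.

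Next I would reduce to $S=\Spec\CC$. Every ingredient is stable under base change along admissible morphisms: $\kgl$ because it is a homotopy quotient of the Thom spectrum $\MGL$ \cite[Prop.~5.12]{so.twisted}, $\MZ$ and $\MZ/2$ by \cite{spitzweckmz}, the slice $\s_{0}\KGL\simeq\MZ$ and hence $\gamma$, the Bott element and the cofiber sequence of Proposition~\ref{prop:kglmod2-bott}, and the operations $\Sq^{i}$. As $2$ is invertible on $S$ the structure map factors through $\Spec\Z[\tfrac12]$, and the base-change homomorphism $\mathcal{A}^{3,1}_{\Z[1/2]}\to\mathcal{A}^{3,1}_{\CC}$ is the identity on the basis $\{\Sq^{1}\Sq^{2},\Sq^{2}\Sq^{1}\}$, hence an isomorphism; so it is enough to compute $\Theta$ after pulling back along $\Spec\CC\to\Spec\Z[\tfrac12]$.

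Then I would apply the complex Betti realisation $\mathrm{Re}\colon\SH(\CC)\to\SH$ of \cite[Appendix~A]{PPR}: it is symmetric monoidal, preserves cofiber sequences, sends $\MZ/2$ to $H\Z/2$ and $\Sq^{i}$ to $Sq^{i}$, and --- by Lemma~\ref{lem:kgl-complex-realization} together with the correspondence of the slice and Postnikov filtrations under realisation \cite[\S3.3]{grso} --- carries the cofiber sequence of Proposition~\ref{prop:kglmod2-bott} to the topological sequence $\Sigma^{2}(\ku/2)\xrightarrow{\beta}\ku/2\xrightarrow{\gamma}H\Z/2\xrightarrow{\delta}\Sigma^{3}(\ku/2)$ with $\gamma$ the zeroth Postnikov truncation. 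Hence $\mathrm{Re}(\Theta)=\Sigma^{3}\gamma\circ\delta$ is the first $k$-invariant of $\ku/2$, which is the Milnor operation $Q_{1}=Sq^{3}+Sq^{2}Sq^{1}$ --- classically $H^{\ast}(\ku/2;\Z/2)\cong\mathcal{A}//E(Q_{1})$, equivalently the Postnikov long exact sequence pins the $k$-invariant down as the unique degree-$3$ relation of $\mathcal{A}/\mathcal{A}Q_{1}$. Since $\mathrm{Re}$ maps the basis $\{\Sq^{3},\Sq^{2}\Sq^{1}\}$ of $\mathcal{A}^{3,1}_{\CC}$ to the basis $\{Sq^{3},Sq^{2}Sq^{1}\}$ of $\mathcal{A}^{3}_{\mathrm{top}}$, it is injective on $\mathcal{A}^{3,1}$; comparing with the first paragraph, $a\,Sq^{3}+b\,Sq^{2}Sq^{1}=Q_{1}$ forces $a=b=1$, so $\Theta=\mathsf{Q}_{1}$, and by the second paragraph this holds over $S$.

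The step I expect to be the main obstacle is the realisation step: one must be sure that $\mathrm{Re}$ carries the slice-theoretic maps $\gamma$ and $\delta$ exactly to the Postnikov data of $\ku$ --- not merely up to a $\tau$-invisible correction --- so that $\mathrm{Re}(\Theta)$ really is the first $k$-invariant of $\ku/2$; this is precisely why the rigidity of $\mathcal{A}^{3,1}$ and the injectivity of $\mathrm{Re}$ on it are indispensable. A realisation-free route would instead identify the integral composite $\Sigma^{3,1}\gamma\circ\delta\colon\MZ\to\Sigma^{3,1}\MZ$ of Proposition~\ref{prop:kgl-bott} with the integral lift $\tilde\beta\,\Sq^{2}$ of $\Sq^{3}$ (the first slice differential of $\KGL$) and then reduce mod $2$: writing $f=\tilde\beta\,\Sq^{2}$ and $\rho\colon\MZ\to\MZ/2$ for the reduction, one has $f/2\circ\rho=\Sq^{1}\Sq^{2}\circ\rho=\mathsf{Q}_{1}\circ\rho$ (because $\Sq^{1}\circ\rho=0$), so $f/2-\mathsf{Q}_{1}\in\{0,\Sq^{2}\Sq^{1}\}$, while $\tilde\beta\circ(f/2)=(\Sigma f)\circ\tilde\beta=\tilde\beta\,\Sq^{2}\Sq^{1}\neq 0$ (its reduction mod $2$ being $\Sq^{1}\Sq^{2}\Sq^{1}=\Sq^{3}\Sq^{1}\neq 0$), which forces $f/2=\mathsf{Q}_{1}$; the obstacle there is supplying the integral input.
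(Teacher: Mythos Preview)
Your proposal is correct and follows essentially the same route as the paper: reduce by base-change invariance and rigidity of the motivic Steenrod algebra to $\Spec(\CC)$, then use Betti realization (Lemma~\ref{lem:kgl-complex-realization}) and the classical identification of the first $k$-invariant of $\ku/2$ as $Q_1$. You supply considerably more detail than the paper does---the explicit rank-two description of $\mathcal{A}^{3,1}$, the injectivity of realization on it, and the alternative integral argument---but the underlying strategy is identical.
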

\begin{proof}
The proof of Lemma~\ref{lem:kglmod2-effective} shows $\KGL$ and $\f_{0}\KGL/2$ are invariant under base change, 
being homotopy quotients of $\MGL$. 
The same holds for $\MZ$ and the motivic Steenrod algebra by \cite[Section 10, Theorem 11.24]{spitzweckmz}.
Hence we may assume $S=\Spec(\ZZ[\tfrac{1}{2}])$, 
and reduce to $\CC$ by rigidity of the motivic Steenrod algebra \cite[Theorem 11.24]{spitzweckmz}.
Over $\CC$ our claim follows from Lemma~\ref{lem:kgl-complex-realization} and the corresponding topological result.
\end{proof}


\begin{remark}
Following \cite[Theorem 5.4]{isaksen-shkembi}, 
Lemma \ref{lem:kgl-bott} shows the mod-$2$ motivic cohomology $\M\Z/2^{\star}\kgl$ is the quotient of the mod-$2$ motivic Steenord algebra $\mathcal{A}^{\star}$ by the augmentation ideal of the 
$\M\Z/2^{\star}$-subalgebra generated by $\mathsf{Q}_{0}=\Sq^{1}$ and $\mathsf{Q}_{1}$. 
\end{remark}

\begin{proposition}
\label{prop:wood}
Over a field of characteristic $\neq 2$, 
multiplication with the Hopf map $\eta$ induces a cofiber sequence
\begin{equation}
\label{equation:wood}
\Sigma^{1,1}\kq \xrightarrow{\eta} \kq 
\xrightarrow{\forg} 
\kgl \xrightarrow{\hyp} \Sigma^{2,1}\kq. 
\end{equation}
Here $\forg$ and $\hyp$ are functorially induced by the forgetful and hyperbolic maps between algebraic and hermitian $K$-theory,
respectively.
\end{proposition}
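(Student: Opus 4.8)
The plan is to deduce \eqref{equation:wood} from its non-connective counterpart. By \cite[Theorem 3.4]{ro.hermitian} multiplication by $\eta$ fits into the cofibre sequence
\[
\Sigma^{1,1}\KQ \xrightarrow{\eta} \KQ \xrightarrow{\forg} \KGL \xrightarrow{\hyp} \Sigma^{2,1}\KQ ;
\]
the task is to pass to very effective covers and to check that the error terms introduced by this truncation are contractible.

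First I would assemble the maps. Multiplication by the Hopf class $\eta\in\pi_{1,1}\unit$ produces $\eta\colon\Sigma^{1,1}\kq\to\kq$ since $\kq$ is a $\unit$-module. As $\kq$ is very effective, the composite $\kq\to\KQ\xrightarrow{\forg}\KGL$ factors uniquely through the very effective cover, which by Lemma~\ref{lem:kgl-effective-perfect} is $\kgl=\f_{0}\KGL$; this defines $\forg\colon\kq\to\kgl$, a map of $\unit$-modules by uniqueness of the factorisation. Hence $\forg\circ\eta$ equals $\eta$-multiplication on $\kgl$ precomposed with $\Sigma^{1,1}\forg$. Since $\kgl$ is a homotopy quotient of $\MGL$ \cite[Proposition 5.12]{so.twisted}, it is an $\MGL$-module, and $\eta$ acts by zero on any $\MGL$-module because $\MGL$ is orientable; therefore $\forg\circ\eta=0$. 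Fixing a cofibre sequence $\Sigma^{1,1}\kq\xrightarrow{\eta}\kq\to\kq/\eta$, one obtains an induced map $\bar\forg\colon\kq/\eta\to\kgl$. The spectrum $\kq/\eta$ is very effective: rotating the cofibre sequence displays it as an extension $\kq\to\kq/\eta\to\Sigma^{2,1}\kq$ of very effective spectra, and $\SH^{\veff}$ is closed under extensions (note $\Sigma^{2,1}\kq=\kq\wedge\PP^{1}\in\SH^{\veff}$).

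The crux is that $\bar\forg$ is a motivic weak equivalence. Both $\kq/\eta$ and $\kgl$ are effective, so $\bar\forg$ is $\f_{0}$ applied to the composite $c\colon\kq/\eta\to\kgl\to\KGL$ (using that $\f_{0}$ fixes effective spectra and that factorisations through the effective cover are unique), and since $\f_{0}$ is exact, $\cone(\bar\forg)\simeq\f_{0}(\cone(c))$. Comparing the cofibre sequence defining $\kq/\eta$ with the non-connective Wood sequence along the very effective cover maps $\kq\to\KQ$ and $\kgl\to\KGL$, which are compatible with multiplication by $\eta$ and with $\forg$, a three-by-three argument identifies $\cone(c)$ with the cofibre of $\eta$ acting on $P:=\cone(\kq\to\KQ)$. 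It thus remains to show $\f_{0}(P/\eta)=0$. Two properties of $P$ enter: on the one hand $\pi_{s,t}(P)(X)$ vanishes once $s$ is large enough relative to $t$ with $t\geq 0$, because in that range $\Sigma^{s,t}\Sigma^{\infty}X_{+}$ is very effective and the connectivity of $\kq$ applies, so $\kq\to\KQ$ is already an isomorphism there; on the other hand $\pi_{\ast,\ast}(P)$ is $\eta$-periodic in low degrees, reflecting $\KQ[\eta^{-1}]=\KW$ together with the periodicity and the known form of the negative slices of $\KQ$ \cite{ro.hermitian}. Inserting these into the long exact sequences computing the homotopy sheaves of $P/\eta$ shows that $P/\eta$ carries no effective homotopy, i.e.\ $\f_{0}(P/\eta)=0$, whence $\bar\forg$ is an equivalence.

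Finally, transporting the boundary map of $\Sigma^{1,1}\kq\xrightarrow{\eta}\kq\to\kq/\eta$ along $\bar\forg$ yields a map $\hyp\colon\kgl\to\Sigma^{2,1}\kq$ sitting in a commutative square with the hyperbolic map $\KGL\to\Sigma^{2,1}\KQ$ and the very effective cover maps; as $\Sigma^{2,1}\kq$ is very effective this identifies it with the functorially induced hyperbolic map, and similarly $\bar\forg$ with the forgetful map, which is \eqref{equation:wood}. I expect the vanishing $\f_{0}(P/\eta)=0$ to be the main obstacle: it requires controlling $\pi_{\ast,\ast}(P)$ simultaneously at the ``top'', where the very effective cover is already an isomorphism, and at the ``bottom'', where $\eta$ is invertible, and then checking that nothing effective survives in the $\eta$-quotient.
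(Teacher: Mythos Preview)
Your overall strategy---descend the non-connective Wood cofibre sequence to very effective covers and show that the discrepancy is effectively trivial---is the same as the paper's. The paper phrases it as checking that the long exact sequence
\[
\dotsm \to \pi_{s-1,t-1}\kq \xrightarrow{\eta} \pi_{s,t}\kq \xrightarrow{\forg} \pi_{s,t}\kgl \xrightarrow{\hyp} \dotsm
\]
is exact for all $t\geq 0$; you repackage the same obstruction as the single vanishing $\f_{0}(P/\eta)=0$ with $P=\cone(\kq\to\KQ)$. These are equivalent formulations.

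The gap is that you do not actually prove $\f_{0}(P/\eta)=0$. Your two inputs are too soft. The first (``$\pi_{s,t}P$ vanishes for $s$ large relative to $t\geq 0$'') only gives $\pi_{s,t}P=0$ for $s\geq t\geq 0$. The second (``$\pi_{\ast,\ast}P$ is $\eta$-periodic in low degrees'') is not justified: the facts you cite concern $\KQ$, not $P$, and to pass to $P$ you would need to control $\pi_{s,t}\kq$ in the same range, which is precisely what is unknown. Concretely, to get $\pi_{s,0}(P/\eta)=0$ you must show that $\eta\colon\pi_{\ast,-1}P\to\pi_{\ast+1,0}P$ is an isomorphism, and this forces you to understand $\pi_{\ast,-1}\kq$. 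The very effective universal property says nothing about weight $-1$.

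The paper closes exactly this gap by invoking \cite[Theorem~16]{bachmann.very} to obtain the equivalence $\f_{0}(\KQ_{\geq 0})\simeq\f_{-1}(\KQ_{\geq 0})$, after checking the auxiliary vanishing $\pi_{t-1,t}\KQ=0$ for $t\geq -1$ via comparison with $\KW$. This identifies $\pi_{\ast,-1}\kq$ with $\pi_{\ast,-1}(\KQ_{\geq 0})$ and makes the $t=0$ case go through. Your argument needs the same input (or an equivalent computation) at this point; the appeal to ``negative slices of $\KQ$'' and ``$\eta$-periodicity'' does not supply it. Once you insert Bachmann's result, your reformulation and the paper's proof become essentially identical.
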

\begin{proof}
Consider the fiber $\F$ of the naturally induced forgetful map $\forg_{\geq 0}\colon \KQ_{\geq 0} \to \KGL_{\geq 0}$.
Since $\f_0$ is a triangulated functor, $\f_0(\F)$ is the fiber of $\forg:=\f_0(\forg_{\geq 0})$. 
The composite map
$$
\Sigma^{1,1}\kq \xrightarrow{\eta} \kq \xrightarrow{\forg} \kgl 
$$
is trivial because the first negative algebraic $K$-group $\pi_{1,1}\kgl=\pi_{1,1}\KGL=K_{-1}$ vanishes over regular schemes. 
We show there is an induced motivic weak equivalence $\Sigma^{1,1}\kq \to \f_0(\F)$ of effective motivic spectra by checking the map of homotopy sheaves $\pi_{s,t}$ is an isomorphism for $t\geq 0$.
This follows if \eqref{equation:wood} induces a long exact sequence of sheaves for $t\geq 0$
\begin{equation} 
\label{eq:wood}
\xymatrix{
\dotsm \ar[r] & \pi_{s,t} \Sigma^{1,1}\kq \ar[r]^-\eta \ar[d]_\iso &
\pi_{s,t}\kq \ar[r]^-\forg \ar[d]^= & \pi_{s,t}\kgl \ar[r]^-\hyp \ar[d]^-= & 
\dotsm\\
\dotsm \ar[r] & \pi_{s-1,t-1}\kq \ar[r]^-\eta &
\pi_{s,t}\kq \ar[r]^-\forg & \pi_{s,t}\kgl \ar[r]^-\hyp & 
\dotsm.}
\end{equation}
By construction, 
(\ref{eq:wood}) is exact for $t\geq 1$ and $s\geq t$ since in the said range it coincides with the long exact sequence
\[ 
\dotsm \to 
\pi_{s,t} \Sigma^{1,1}\KQ \xrightarrow{\eta}
\pi_{s,t}\KQ \xrightarrow{\forg} 
\pi_{s,t}\KGL \xrightarrow{\hyp} \dotsm 
\]
induced by the Wood cofiber sequence for $\eta$, $\KQ$, and $\KGL$ \cite[Theorem 3.4]{ro.hermitian}. 
  
If $t\geq 1$ and $s=t$,
$\pi_{t,t}(\forg)\colon \pi_{t,t}\kq \to \pi_{t,t}\kgl$ is surjective since its target is trivial.
Thus (\ref{eq:wood}) is exact for $t\geq 1$ and all $s$;
recall that $\pi_{s,t}\kq = \pi_{s,t}\kgl = 0$ for all $s<t$.
  
It remains to consider the case $t=0$.
By \cite[Theorem 16]{bachmann.very} the composite
\[ 
\f_0(\KQ_{\geq 0}) \to \f_{-1}(\KQ_{\geq 0}) \to \f_{-1}(\KQ_{\geq -1}) 
\]
is an equivalence.
The canonical map $\KQ_{\geq 0} \to \KQ_{\geq -1}$ is an isomorphism on homotopy sheaves $\pi_{s,t}$ for all $t\geq -1$ and all $s$. 
When $s<t-1$ and $s\geq t$ this follows by construction. 
The case $s=t-1$ holds since $\pi_{t-1,t}\KQ=0$ for all $t\geq -1$. 
More precisely,
the vanishing for $t\geq 0$ is implied by comparison with Witt theory because $\pi_{t-1,t}\KW = 0$ for all $t$. 
The case $t=-1$ follows from the long exact sequence
\[ 
\dotsm \to \pi_{0,0}\KGL \xrightarrow{0}\pi_{-2,-1}\KQ \xrightarrow{\eta} \pi_{-1,0} \KQ \xrightarrow{\forg} \pi_{-1,0}\KGL = 0, 
\]
and surjectivity of the rank map 
$\forg\colon \pi_{0,0}\kq \to \pi_{0,0}\kgl$.
It follows that there is a canonical motivic weak equivalence 
\[ 
\f_0(\KQ_{\geq 0}) \overset{\iso}{\to} \f_{-1}(\KQ_{\geq 0}), 
\]
which implies exactness of (\ref{eq:wood}) for $t=0$.
\end{proof}




\begin{lemma}
\label{lem:kq-wood}
If $2$ is invertible on a base scheme $S$ as in \eqref{baseschemeassumption},
then the composite
\[ 
\kgl\smash \MZ/2 \xrightarrow{\hyp\smash \MZ/2} 
\Sigma^{2,1}\kq\smash \MZ/2 \xrightarrow{\forg \smash \MZ/2} 
\Sigma^{2,1}\kgl\smash \MZ/2
\]
is given by multiplication with $\Sq^2\colon \MZ/2\to \Sigma^{2,1}\MZ/2$.
\end{lemma}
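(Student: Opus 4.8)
The plan is to reduce to $S=\Spec(\CC)$ by base change and rigidity, exactly as in Lemma~\ref{lem:kgl-bott}, and then to read the statement off from Betti realization and the corresponding fact for connective complex and real topological $K$-theory. For the reduction: every object and map in the composite is pulled back from $\Spec(\ZZ[\tfrac{1}{2}])$ --- for $\kgl$, $\MZ$ and the motivic Steenrod algebra this was recorded in the proof of Lemma~\ref{lem:kgl-bott} (via \cite[Theorem~5.7]{so.twisted} and \cite[Section~10, Theorem~11.24]{spitzweckmz}), and it holds for $\KQ$, hence for $\kq$ and for the forgetful and hyperbolic maps, by \cite{hornbostel.hermitian} and \cite{ro.hermitian} --- so we may take $S=\Spec(\ZZ[\tfrac{1}{2}])$, and then $S=\Spec(\CC)$ by rigidity of the motivic Steenrod algebra \cite[Theorem~11.24]{spitzweckmz}.

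Over $\CC$ I would apply the Betti realization functor \cite[Appendix~A]{PPR}. It is exact and symmetric monoidal, sends $S^{2,1}=\PP^{1}$ to $S^{2}$ and the motivic Hopf map to the topological one, carries $\MZ/2$ to the topological mod-$2$ Eilenberg--MacLane spectrum $H\FF_{2}$ and $\Sq^{2}$ to the topological $\Sq^{2}$, and by Lemma~\ref{lem:kgl-complex-realization} sends $\kgl$ to $\ku$. Applying it to \eqref{equation:wood} one obtains Wood's topological cofiber sequence $\Sigma\ko\xrightarrow{\eta}\ko\xrightarrow{c}\ku\xrightarrow{\partial}\Sigma^{2}\ko$; in particular $\kq$ realizes to $\ko$, the forgetful map to the complexification $c\colon\ko\to\ku$, and the hyperbolic map to the boundary map $\partial$. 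Hence the composite in the statement realizes to $(c\smash H\FF_{2})\circ(\partial\smash H\FF_{2})\colon\ku\smash H\FF_{2}\to\Sigma^{2}\ku\smash H\FF_{2}$.

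It remains to invoke the classical topological identity $(c\smash H\FF_{2})\circ(\partial\smash H\FF_{2})=\id_{\ku}\smash\Sq^{2}$: smashing Wood's cofiber sequence with $H\FF_{2}$ annihilates $\eta$, hence splits $\ku\smash H\FF_{2}$ as $\ko\smash H\FF_{2}\vee\Sigma^{2}\ko\smash H\FF_{2}$ with $c\smash H\FF_{2}$ and $\partial\smash H\FF_{2}$ the evident inclusion and projection, so the resulting self-map is determined in internal degree $2$; equivalently, this is the statement that Wood's sequence exhibits $H\FF_{2}^{\ast}\ku=\mathcal{A}^{\ast}//E(1)$ as a nonsplit extension of $H\FF_{2}^{\ast}\ko=\mathcal{A}^{\ast}//\mathcal{A}(1)$ by its double suspension with $k$-invariant $\Sq^{2}$ (compare \cite{isaksen-shkembi}, \cite{hill}). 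Thus the motivic composite has the same Betti realization as $\id_{\kgl}\smash\Sq^{2}$. To conclude that the two maps agree, I would use that over $\CC$ the motivic homology $\M\Z/2_{\star}\kgl$ is a free $\M\Z/2_{\star}$-module, so that $\kgl\smash\MZ/2$ is a wedge of suspensions of $\MZ/2$ and the difference of the two maps is a matrix of mod-$2$ motivic cohomology operations of fixed bidegree, together with the fact that Betti realization carries the Milnor basis of the motivic Steenrod algebra over $\CC$ to that of the topological Steenrod algebra and is therefore injective in each bidegree; the difference then vanishes.

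The step I expect to be the main obstacle is the identification, in the second paragraph, of the realization of \eqref{equation:wood} with the \emph{standard} topological Wood cofiber sequence --- in particular that $\hyp$ realizes to the boundary map $\partial$ rather than merely to some map with the same cofiber. Once that is settled, the topological computation of $c\circ\partial$ and the bidegree-wise injectivity of realization close the argument.
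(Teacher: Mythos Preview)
Your argument is correct and follows the same route as the paper: reduce to $\Spec(\ZZ[\tfrac{1}{2}])$ and then to $\CC$ exactly as in Lemma~\ref{lem:kgl-bott}, and conclude via Betti realization and the classical identity for $\ko$ and $\ku$; the paper packages the realization step as Lemma~\ref{lem:kq-complex-realization} (whose proof is precisely that Betti realization preserves the Wood cofiber sequence, so your worry about $\hyp\mapsto\partial$ is handled there) and then cites ``the corresponding topological statement''. Your write-up simply makes explicit the final step --- bidegree-wise injectivity of realization on operations over $\CC$ --- that the paper leaves to the reader both here and in Lemma~\ref{lem:kgl-bott}.
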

\begin{proof}
As in the proof of Lemma~\ref{lem:kgl-bott} it suffices to work over $\Spec(\ZZ[\tfrac{1}{2}])$, and hence over $\CC$.
The result follows from Lemma \ref{lem:kq-complex-realization} and the corresponding topological statement.
\end{proof}

\begin{lemma}
\label{lem:kq-complex-realization}
If $F$ admits a complex embedding, 
the Betti realization of $\kq$ coincides with the connective cover $\ko$ of the real topological $K$-theory spectrum $\KO$ in the topological stable homotopy category.
\end{lemma}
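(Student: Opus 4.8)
The plan is to transport the Wood cofiber sequence \eqref{equation:wood} to topology and match it with the classical Wood cofiber sequence for $\ko$. Write $B$ for the complex Betti realization \cite[Appendix A]{PPR}; it is symmetric monoidal, preserves homotopy colimits and hence cofiber sequences, sends $\G$ to $S^{1}$ and $\PP^{1}$ to $S^{2}$, carries the motivic Hopf map to the topological Hopf map $\eta$, and identifies $\kgl$ with $\ku$ by Lemma~\ref{lem:kgl-complex-realization}. Applying $B$ to \eqref{equation:wood} yields a cofiber sequence
\[
\Sigma B\kq \xrightarrow{\eta} B\kq \xrightarrow{B\forg} \ku \xrightarrow{B\hyp} \Sigma^{2}B\kq
\]
in the topological stable homotopy category; equivalently $B\kq\wedge C\eta\simeq\ku$, with the map $B\kq\to\ku$ being $B\forg$. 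Moreover $B\kq$ is connective: as $\kq$ lies in $\SH^{\veff}$ it is built from $\PP^{1}$-suspension spectra of smooth $F$-schemes under homotopy colimits and extensions, these generators realize to suspension spectra of spaces and hence to connective spectra, and the class of connective spectra is closed under homotopy colimits and extensions (compare \cite[\S3.3]{grso}).

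Next I would invoke Wood's theorem in topology: there is a cofiber sequence $\Sigma\ko\xrightarrow{\eta}\ko\xrightarrow{c}\ku\xrightarrow{\partial}\Sigma^{2}\ko$, with $c$ complexification and $\partial$ the Wood boundary, that is $\ko\wedge C\eta\simeq\ku$ with $\ko\to\ku$ the map $c$. The comparison input is that the forgetful and hyperbolic maps between hermitian and algebraic $K$-theory realize, after complex Betti realization, to the complexification and realification maps between $\KO$ and $\KU$; this is the step that genuinely uses properties of hermitian $K$-theory, and it is cleanest to organize through the known identification of $B\KQ$ with $\KO$ as a module with duality over $B\KGL\simeq\KU$. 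Granting this, and using the connectivity of $B\kq$ to factor $B\kq\to B\KQ\simeq\KO$ through $\tau_{\geq 0}\KO=\ko$, one obtains a map $\phi\colon B\kq\to\ko$ with $c\circ\phi=B\forg$ and $\Sigma^{2}\phi\circ B\hyp=\partial$. Since $-\wedge C\eta$ is functorial, the equivalences $B\kq\wedge C\eta\simeq\ku\simeq\ko\wedge C\eta$ turn $\phi\wedge C\eta$ into a self-map $w$ of $\ku$ with $w\circ B\forg=B\forg$ and $\partial\circ w=\partial$; a comparison of homotopy groups, using the known homotopy of $\ko$ and $\ku$ and the injectivity of the relevant maps in the two Wood long exact sequences, forces $w$ to be an isomorphism on homotopy groups, hence an equivalence.

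Therefore $\phi\wedge C\eta$ is an equivalence, so $\operatorname{cofib}(\phi)\wedge C\eta\simeq 0$, i.e.\ $\eta$ acts invertibly on $\operatorname{cofib}(\phi)$. But $\operatorname{cofib}(\phi)$ is connective, being the cofiber of a map of connective spectra, and an $\eta$-periodic connective spectrum is contractible: the equivalence $\Sigma(-)\simeq(-)$ forces $\pi_{n-1}\cong\pi_{n}$ for all $n$, and these groups vanish for $n<0$. Hence $\phi$ is an equivalence and $B\kq\simeq\ko$. The main obstacle I anticipate is the comparison in the second paragraph: controlling the Betti realizations of the forgetful and hyperbolic maps precisely enough to identify the realization of \eqref{equation:wood} with the topological Wood cofiber sequence. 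The connectivity of $B\kq$ and the bare cofiber sequence alone do not pin down $B\kq$, so this extra input — essentially $B\KQ\simeq\KO$ together with the behaviour of $\forg$ and $\hyp$ — is indispensable.
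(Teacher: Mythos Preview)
Your argument is correct and follows the same route as the paper's one-line proof: Betti realization sends $\KQ$ to $\KO$ and $\kgl$ to $\ku$ (Lemma~\ref{lem:kgl-complex-realization}), and preserves the Wood cofiber sequence~\eqref{equation:wood}. You simply spell out the details the paper leaves implicit---connectivity of $B\kq$ via very effectivity, the factorization $\phi\colon B\kq\to\ko$ of the realized map $B\kq\to B\KQ\simeq\KO$, and the comparison with the topological Wood sequence---so the strategies coincide.
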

\begin{proof}
This follows since the Betti realization sends $\KQ$ to $\KO$, 
$\kgl$ to $\ku$ by Lemma~\ref{lem:kgl-complex-realization},
and preserves the Wood cofiber sequence.
\end{proof}

\begin{remark}
As in \cite[Theorem 5.11]{isaksen-shkembi}, 
Lemma \ref{lem:kgl-bott} identifies $\M\Z/2^{\star}\kq$ with the quotient of the mod-$2$ motivic Steenord algebra $\mathcal{A}^{\star}$ by the augmentation ideal of the 
$\M\Z/2^{\star}$-subalgebra generated by $\Sq^{1}$ and $\Sq^{2}$, 
and the homotopy of $\kq\smash \MZ/2$ as a comodule over the dual motivic Steenrod algebra recorded by the $\M\Z/2$-based Adams spectral sequence for $\kq$ \eqref{equation:masskql}.
\end{remark}

Next we observe that $\kgl$ differs from the cover of algebraic $K$-theory introduced in \cite{isaksen-shkembi}.
By the cofiber sequence
\[ 
\kgl=\f_0(\KGL_{\geq 0}) \to \f_{-1}(\KGL_{\geq -1}) \to \s_{-1}\KGL = \Sigma^{-2,-1}\MZ, 
\]
we obtain a long exact sequence and an isomorphism
\begin{equation}
\label{equation:lespi-1-1}
\dotsm 
\to \pi_{0,-1} \s_{-1}\KGL
\to \pi_{-1,-1}\kgl
\overset{\iso}{\to} 
\pi_{-1,-1}\f_{-1}(\KGL_{\geq -1}) 
\to \pi_{-1,-1}\s_{-1}\KGL 
\to \dotsm.
\end{equation}
The outer terms in \eqref{equation:lespi-1-1} are trivial.
Since $\pi_{-1,-1}\f_{-1}(\KGL_{\geq -1}) \iso \pi_{-1,-1}\KGL$ it follows that $\pi_{-1,-1}\kgl \iso K_{1}(F)\iso F^{\times}$.
Over the complex numbers, 
this calculation distinguishes $\kgl$ from the ($2$-complete) positive cellular cover of $\KGL$ in \cite{isaksen-shkembi} because $\pi_{-1,-1}$ of the latter is trivial by construction.

Finally, 
we remark that $\kq$ does not coincide with the effective cover $\kqq$ featuring in the solution of the homotopy limit problem for the $C_{2}$-action on $\kgl$ in \cite{rso.hlp}.

\section{Slice computations}
\label{sec:slice-computations}

We shall identify the slices of $\kq$ similarly to the slices of $\KQ$ in \cite{ro.hermitian}. 
The crucial ingredients are the Wood cofiber sequence \eqref{prop:wood} and the slices of connective algebraic $K$-theory $\kgl$.
\begin{theorem}
\label{thm:slices-kgl}
Over $F$ the canonical map $\kgl \to \KGL$ induces an isomorphism on all nonnegative slices.
The negative slices of $\kgl$ are zero.
\end{theorem}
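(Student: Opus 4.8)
The plan is to deduce both statements from the identification $\kgl \iso \f_{0}\KGL$ over $F$ supplied by Lemma~\ref{lem:kgl-effective-perfect}, together with the standard fact that a $q$-effective cover does not affect slices in degrees $\geq q$. By Definition~\ref{def:very-effective-cover} and the inclusion $\SH^{\veff}\subseteq\SH^{\eff}$ the canonical map $\kgl\to\KGL$ factors as $\kgl\to\f_{0}\KGL\to\KGL$, and by Lemma~\ref{lem:kgl-effective-perfect} the first arrow is a motivic weak equivalence over $F$; so $\s_{q}$ of it is an isomorphism for every $q$, and it remains to control the slices of $\kgl$ in negative degrees and those of the effective cover map $\f_{0}\KGL\to\KGL$ in nonnegative degrees.

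For the negative slices, note that $\kgl$, being very effective, lies in $\SH^{\eff}$ and hence in $\Sigma^{2q,q}\SH^{\eff}$ for every $q\leq 0$. Consequently the $q$-effective cover $\f_{q}\kgl\to\kgl$ is a motivic weak equivalence for all $q\leq 0$, so that $\s_{q}\kgl=\cone(\f_{q+1}\kgl\to\f_{q}\kgl)$ is contractible whenever $q<0$; this is just the remark that effective motivic spectra have vanishing negative slices.

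For the nonnegative slices, let $C$ denote the cofiber of $\f_{0}\KGL\to\KGL$. Then $[Y,C]=0$ for every effective $Y$, hence for every $Y\in\Sigma^{2q,q}\SH^{\eff}$ with $q\geq 0$; in particular the defining adjunction of $\f_{q}$ gives $[\f_{q}C,\f_{q}C]\iso[\f_{q}C,C]=0$, so $\f_{q}C$, and therefore $\s_{q}C$, is contractible for all $q\geq 0$. Since $\s_{q}$ is exact, applying it to the cofiber sequence $\f_{0}\KGL\to\KGL\to C$ yields $\s_{q}\f_{0}\KGL\xrightarrow{\iso}\s_{q}\KGL$ for $q\geq 0$, and precomposing with the equivalence $\kgl\xrightarrow{\iso}\f_{0}\KGL$ of Lemma~\ref{lem:kgl-effective-perfect} proves the first assertion. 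The only step that is not purely formal is this appeal to Lemma~\ref{lem:kgl-effective-perfect}, which we may assume and whose positive-characteristic case rests on the connectivity of $\f_{0}\KGL$ via Bachmann's criteria; for orientation one may add that the resulting nonnegative slices are the classical ones $\s_{q}\KGL\iso\Sigma^{2q,q}\MZ$ of \cite{levine.coniveau}, \cite{Voevodsky:motivicss}.
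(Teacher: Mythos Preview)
Your proof is correct and follows essentially the same approach as the paper: both reduce immediately to Lemma~\ref{lem:kgl-effective-perfect} identifying $\kgl$ with $\f_{0}\KGL$, after which the result is a formal consequence of the slice machinery. The paper compresses your second and third paragraphs into the single phrase ``this follows by construction,'' so what you have written is simply a careful unpacking of that phrase.
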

\begin{proof}
Since $\kgl=\f_{0}\KGL$ by Lemma~\ref{lem:kgl-effective-perfect}, 
this follows by construction.
\end{proof}

Identifying the slices of $\kq$ is more involved because $\kq\neq\f_{0}\KQ$. 

\begin{theorem}
\label{thm:slices-kq}
When $\Char(F)\neq 2$ the nonnegative slices of $\kq$ are given as 
\begin{equation*}
\s_{q}\kq 
=
\begin{cases}
\Sigma^{2n,2n}\MZ/2 \vee \Sigma^{2n+2,2n}\MZ/2\vee \dotsm \vee \Sigma^{4n-2,2n}\MZ/2  \vee \Sigma^{4n,2n}\MZ  & q=2n, \\
\Sigma^{2n+1,2n+1}\MZ/2 \vee \Sigma^{2n+3,2n+1}\MZ/2 \vee \dotsm \vee \Sigma^{4n+1,2n+1}\MZ/2 & q=2n+1.
\end{cases}
\end{equation*}
The negative slices of $\kq$ are zero.
Moreover, 
the canonical map $\kq \to \KQ$ induces a natural inclusion on slices, and respects the multiplicative structure.
\end{theorem}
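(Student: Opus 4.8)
The plan is to compute the slices of $\kq$ by exploiting the Wood cofiber sequence \eqref{equation:wood} together with the known slices of $\kgl$ from Theorem~\ref{thm:slices-kgl}, namely $\s_{q}\kgl = \Sigma^{2q,q}\MZ$ for $q\geq 0$ and $\s_q\kgl=0$ for $q<0$, and the fact that $\eta\colon\Sigma^{1,1}\kq\to\kq$ raises slice degree in a controlled way. Since the functors $\s_q$ are triangulated and $\s_q(\Sigma^{1,1}-)\iso\Sigma^{1,1}\s_{q-1}(-)$, applying $\s_q$ to \eqref{equation:wood} yields a long exact sequence (a triangle) relating $\Sigma^{1,1}\s_{q-1}\kq$, $\s_q\kq$, $\s_q\kgl$, and a connecting map. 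The strategy is to run this as an induction on $q$: the base case $q=0$ must be pinned down separately (here $\s_0\kq=\MZ$ since $\pi_{0,0}\kq\iso GW_0$ surjects onto $\pi_{0,0}\kgl\iso\ZZ$ with kernel recorded by $\eta$, or more directly because the $0$th slice is controlled by rational homotopy), and then each inductive step extracts $\s_q\kq$ from $\s_{q-1}\kq$ and the single Eilenberg--MacLane summand $\Sigma^{2q,q}\MZ$ coming from $\s_q\kgl$.

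The key subtlety is that the connecting triangle does not immediately split, so one must argue that the relevant boundary maps vanish and that no extension problems obstruct the stated wedge decomposition. For this I would bring in the motivic analogue of the topological computation of the slices (equivalently, the $C_2$-slices or the Mahowald--Shick-type filtration underlying $\ko$) and use the Betti realization identification $\kq\simeq\ko$ from Lemma~\ref{lem:kq-complex-realization} to predict the answer, then verify it integrally. Concretely, the map $\hyp\colon\kgl\to\Sigma^{2,1}\kq$ on slices must be identified: on $\s_q$ it sends $\Sigma^{2q,q}\MZ$ into $\Sigma^{2,1}\s_{q-1}\kq$, and one shows the composite with $\forg$ recovers multiplication by $\Sq^2$ (this is exactly Lemma~\ref{lem:kq-wood}, at least mod $2$), which forces the $\MZ$-summand in even slices to survive while producing a new $\Sigma^{4n,2n}\MZ/2$ (resp. $\Sigma^{4n+1,2n+1}\MZ/2$) summand at the top of each slice via the reduction $\MZ\to\MZ/2$. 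The parity split in the formula — integral Eilenberg--MacLane only in even slices, purely mod-$2$ in odd slices — is exactly what the $\eta$-multiplication predicts, since $\eta$ is $2$-torsion ($2\eta=0$ in $\pi_{0,0}$ as $\eta=\langle-1\rangle-1$ times a unit, or rather $\eta^4=0$-type relations), so shifting by $\Sigma^{1,1}$ and back introduces mod-$2$ phenomena.

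The remaining two clauses are comparatively formal. That $\kq\to\KQ$ induces an inclusion on slices follows because $\kq$ is the very effective cover, so the map $\s_q\kq\to\s_q\KQ$ is split injective: one compares with the known slices of $\KQ$ from \cite{ro.hermitian} (which are $8$-periodic and strictly larger), and the very effective truncation simply discards the negatively-shifted summands, leaving an inclusion of a sub-wedge. Multiplicativity of $\kq\to\KQ$ on slices is inherited from the fact that $\kq\to\KQ$ is a map of $E_\infty$- (or at least homotopy ring) spectra — the very effective cover of a ring spectrum is a ring spectrum by \cite{grso}, the slice functor is lax monoidal, and the orientation/unit maps are compatible — so the induced map on the slice $E_\infty$-ring $\bigvee_q\s_q$ is a ring map.

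The main obstacle I anticipate is the inductive splitting step: showing that the triangle $\Sigma^{1,1}\s_{q-1}\kq\to\s_q\kq\to\s_q\kgl\to\Sigma^{2,1}\s_{q-1}\kq$ splits as claimed, i.e. that the boundary map $\s_q\kgl=\Sigma^{2q,q}\MZ\to\Sigma^{2,1}\s_{q-1}\kq$ has image exactly a copy of $\Sigma^{2q,q}\MZ/2$ (killing the integral-to-mod-$2$ reduction) and that there are no further extensions among the resulting $\MZ/2$-Eilenberg--MacLane summands. This should follow from the identification of that boundary map with $\Sq^2$ (Lemma~\ref{lem:kq-wood}) plus the fact that $\mathrm{Ext}$-groups between shifted $\MZ/2$'s and $\MZ$'s in the heart of the slice $t$-structure are governed by the motivic Steenrod algebra, but making this precise — and checking it holds integrally rather than merely after $2$-completion and after Betti realization — is where the real work lies.
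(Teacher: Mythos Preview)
Your overall plan---apply the slice functors to the Wood cofiber sequence~\eqref{equation:wood}, use $\s_q\Sigma^{1,1}\iso\Sigma^{1,1}\s_{q-1}$, and induct on $q$---is exactly what the paper does. The base case is in fact immediate: since $\s_{-1}\kq=0$, the triangle gives $\s_0\kq\xrightarrow{\iso}\s_0\kgl=\MZ$; no appeal to $\pi_{0,0}$ or rational homotopy is needed.

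The genuine gap is in your identification of the connecting map $\s_q\hyp\colon \Sigma^{2q,q}\MZ\to\Sigma^{2,1}\s_{q-1}\kq$. You propose to use Lemma~\ref{lem:kq-wood} and Betti realization. But Lemma~\ref{lem:kq-wood} concerns the spectrum-level composite $\forg\circ\hyp$ after smashing with $\MZ/2$, not the slice of $\hyp$; it gives only mod-$2$ information, and Betti realization constrains nothing away from fields with a complex embedding. What is actually needed, at each odd-indexed step, is the \emph{integral} statement that the component of $\s_q\hyp$ landing in the top summand $\Sigma^{2q,q}\MZ\subset\Sigma^{2,1}\s_{q-1}\kq$ is multiplication by~$2$. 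The paper gets this by the comparison you relegate to an afterthought: naturality of the Wood cofiber sequence under $\kq\to\KQ$ means $\s_q\hyp$ for $\kq$ is read off from $\s_q\hyp$ for $\KQ$, and the latter was already computed in \cite[\S4.3]{ro.hermitian}. The remaining components of $\s_q\hyp$ vanish for elementary degree reasons (there are no nonzero maps $\Sigma^{2q,q}\MZ\to\Sigma^{j,q}\MZ/2$ for $j<2q$), and the same degree count splits the resulting extensions; no Steenrod-algebra $\Ext$-analysis is required.

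Two small corrections: motivically $\eta$ is \emph{not} nilpotent, and $\eta$ is not $\langle-1\rangle-1$; those asides should be removed.
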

\begin{proof}
Since $\kq = \f_{0}(\KQ_{\geq 0})$ is (very) effective, its negative slices are zero. 
Applying the slice functor to \eqref{prop:wood} yields a cofiber sequence. 
The natural isomorphism $\s_{q}\circ \Sigma^{1,1} \iso\Sigma^{1,1}\circ \s_{q-1}$ of \cite[Lemma 2.1]{ro.hermitian} shows the forgetful map $\forg\colon \kq \to \kgl$ induces an isomorphism on zero slices
\[ 
\s_0\kq \xrightarrow{\iso} \s_0 \kgl, 
\]
and likewise for the unit map $\unit \to \kq$.

For the $1$-slices there is a cofiber sequence
\[ 
\Sigma^{1,1}\s_0\kq = 
\Sigma^{1,1}\MZ \xrightarrow{\eta} \s_1\kq\xrightarrow{\s_1\forg} \s_1\kgl = 
\Sigma^{2,1} \MZ \xrightarrow{\s_1\hyp} 
\Sigma^{2,1}\s_0\kq = 
\Sigma^{2,1} \MZ.
\]
Here $\s_1\hyp$ can be identified with an integer $n\in\ZZ$. 
Comparison with the hyperbolic map $\KGL \to \KQ$ in \cite[\S4.3]{ro.hermitian} shows that $n=2$, 
so that $\s_1\kq = \Sigma^{1,1}\MZ/2$.

For the $2$-slices there is a cofiber sequence
\[ 
\Sigma^{1,1}\s_1\kq =
\Sigma^{2,2}\MZ/2 \xrightarrow{\eta} \s_2\kq \xrightarrow{\s_2\forg} 
\s_2\kgl = 
\Sigma^{4,2}\MZ \xrightarrow{\s_2\hyp} 
\Sigma^{2,1}\s_1\kq = 
\Sigma^{3,2} \MZ/2.
\]
Hence $\s_2\hyp=0$, 
the cofiber sequence splits, 
and we get $\s_{2}\kq = \Sigma^{2,2}\MZ/2 \vee \Sigma^{4,2}\MZ$. 
Moreover,
$\s_{2}\forg$ is the projection map onto $\Sigma^{4,2}\MZ$.

For the $3$-slices there is a cofiber sequence
\[ 
\Sigma^{1,1}\s_2\kq =
\Sigma^{3,3}\MZ/2 \vee \Sigma^{5,3}\MZ\xrightarrow{\eta} 
\s_3\kq \xrightarrow{\s_3\forg} 
\s_3\kgl = 
\Sigma^{6,3} \MZ \xrightarrow{\s_3\hyp} 
\Sigma^{2,1}\s_2\kq.
\]
Here $\s_3\hyp$ maps trivially to $\Sigma^{4,3}\MZ/2$,
while the component of $\s_3\hyp$ mapping to $\Sigma^{6,3}\MZ$ can be identified with an integer $n\in\ZZ$.
We deduce $n=2$ by comparison with the hyperbolic map $\KGL \to \KQ$ in \cite[\S4.3]{ro.hermitian}. 
Hence we obtain $\s_{3}\kq\iso \Sigma^{3,3}\MZ/2\vee \Sigma^{5,3}\MZ/2$. 

Iterating these arguments produces the claimed calculation. 
\end{proof} 

\begin{remark}
Contrary to the calculation of the slices of $\KQ$ in \cite{ro.hermitian} there is no ``mysterious summand'' appearing in Theorem \ref{thm:slices-kq} thanks to the connectivity of $\kq$.
Each slice of $\kq$ is a finite sum of motivic Eilenberg-MacLane spectra for the groups $\ZZ$ and $\ZZ/2$.
The odd slices of $\kq$ are cellular of finite type for every $F$ \cite[\S3.3]{rso.first}, and likewise for all the slices when $\Char(F)=0$.
\end{remark}

The multiplicative structure on the graded slices $\s_{\ast}\kq$ can be identified similarly to $\s_{\ast}\KQ$ as in~\cite[Theorem 3.3]{ro.mult-slices-hermitian}. 
In more details, 
there is a motivic weak equivalence
\[ 
\s_\ast\kq
\iso 
\M\ZZ[\eta,\sqrt{\alpha}]/(2\eta=0,\eta^2\xrightarrow{\delta}\sqrt{\alpha}). 
\]
Here $\eta$ has bidegree $(1,1)$ and $\sqrt{\alpha}$ is a class of bidegree $(4,2)$ arising from the $(8,4)$-periodicity operator on $\KQ$ mentioned in the introduction.
Moreover, 
the action of the Hopf map $\eta$ on the slices of $\kq$ can be read off from the proof of Theorem~\ref{thm:slices-kq}, 
giving us the next result.

\begin{theorem}
\label{thm:slices-kw}
When $\Char(F)\neq 2$ the slices of $\kq[\frac{1}{\eta}] = \KW_{\geq 0}$ are given by
\[ 
\s_{q}(\KW_{\geq 0}) = 
\Sigma^{q,q} \Bigl( \MZ/2 \vee \Sigma^{2,0} \MZ/2 \vee \Sigma^{4,0} \MZ/2 \vee \dotsm \Bigr), 
\]
and
\[ 
\s_{\ast}(\KW_{\geq 0})
\iso 
\M\ZZ[{\eta},\sqrt{\alpha}]/(2\eta=2\sqrt{\alpha}=0,{\eta}^2\xrightarrow{\Sq^1}\sqrt{\alpha}).
\]
The canonical map $\KW_{\geq 0}\to \KW$ induces the natural inclusion on slices,
and respects the multiplicative structure.
\end{theorem}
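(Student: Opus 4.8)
The plan is to deduce everything from the slice computation of Theorem~\ref{thm:slices-kq} and the action of $\eta$ on $\s_{\ast}\kq$ visible in its proof, using that the slice functors commute with inverting $\eta$.

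First I would identify $\kq[\tfrac{1}{\eta}]$ with the connective cover $\KW_{\geq0}$ of Witt theory $\KW\simeq\KQ[\tfrac{1}{\eta}]$. Since $\eta$ acts as zero on the orientable spectrum $\KGL$, since $\kgl=\f_{0}\KGL$ (Lemma~\ref{lem:kgl-effective-perfect}) and $\kq=\f_{0}(\KQ_{\geq0})$ agree with $\KGL$, respectively $\KQ$, on $\pi_{s,t}$ for $t\geq 0$ (and $s\geq t$), and since $\pi_{s,t}\kq=\pi_{s,t}\kgl=0$ for $s<t$, a direct comparison of homotopy sheaves gives $\pi_{s,t}(\kgl[\tfrac{1}{\eta}])\iso\colim_{n}\pi_{s+n,t+n}\kgl=0$, while $\pi_{s,t}(\kq[\tfrac{1}{\eta}])\iso\pi_{s,t}\KW$ for $s\geq t$ and $0$ for $s<t$; the latter are the homotopy sheaves of $\KW_{\geq0}$, so the canonical map gives $\kq[\tfrac{1}{\eta}]\simeq\KW_{\geq0}$. (Contractibility of $\kgl[\tfrac{1}{\eta}]$ also follows from the Wood cofiber sequence~\eqref{equation:wood}.)

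Because the effective covers $\f_{q}$, hence the slice functors $\s_{q}$, preserve homotopy colimits, and $\s_{q}\circ\Sigma^{1,1}\iso\Sigma^{1,1}\circ\s_{q-1}$, inverting $\eta$ yields
\[
\s_{q}(\KW_{\geq0})\iso\hocolim\bigl(\s_{q}\kq\xrightarrow{\eta}\Sigma^{-1,-1}\s_{q+1}\kq\xrightarrow{\eta}\Sigma^{-2,-2}\s_{q+2}\kq\to\dotsm\bigr).
\]
The proof of Theorem~\ref{thm:slices-kq} shows that $\eta\colon\Sigma^{1,1}\s_{q-1}\kq\to\s_{q}\kq$ is the identity on each $\MZ/2$-summand onto the corresponding one, that from an even slice it further reduces the integral summand $\Sigma^{4n,2n}\MZ$ modulo $2$ onto the top summand of the following odd slice, and that from an odd slice it is the split inclusion missing the new integral summand of the following even slice. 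In the colimit every $\MZ/2$-summand therefore stabilizes, while each integral summand is pushed to ever larger topological degree and then reduced modulo $2$; since every term is a finite wedge of motivic Eilenberg--MacLane spectra and the transition maps are eventually the identity on each summand, no nontrivial $k$-invariants arise, and one reads off $\s_{q}(\KW_{\geq0})=\Sigma^{q,q}(\MZ/2\vee\Sigma^{2,0}\MZ/2\vee\Sigma^{4,0}\MZ/2\vee\dotsm)$.

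For the multiplicative assertion I would invert $\eta$ in the presentation $\s_{\ast}\kq\iso\M\ZZ[\eta,\sqrt{\alpha}]/(2\eta=0,\eta^{2}\xrightarrow{\delta}\sqrt{\alpha})$ established as in~\cite[Theorem 3.3]{ro.mult-slices-hermitian}: the localization is again a quotient of a polynomial $\M\ZZ$-algebra, the integral summand carrying $\sqrt{\alpha}$ becomes a $\MZ/2$-summand by the slice computation above, whence $2\sqrt{\alpha}=0$, and the connecting map in the surviving relation is thereby replaced by its mod-$2$ counterpart $\Sq^{1}$ --- checked on homotopy sheaves or by comparison with the slices of $\KW$ in~\cite{ro.hermitian}. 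Finally $\KW_{\geq0}\to\KW$ is obtained from $\kq\to\KQ$ by inverting $\eta$, so it induces the inclusion on slices and respects products by applying $\eta$-inversion to the corresponding statements in Theorem~\ref{thm:slices-kq}. The main obstacle is the careful bookkeeping of the $\eta$-action through the colimit --- verifying that the integral summands genuinely disappear and that no exotic extensions are created --- together with pinning down the surviving connecting map as $\Sq^{1}$; the rest is formal given Theorem~\ref{thm:slices-kq} and the compatibility of $\s_{q}$ with $\eta$-inversion.
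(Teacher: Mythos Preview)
Your proposal is correct and follows essentially the same approach as the paper. The paper's own justification is the single sentence preceding the theorem: ``Moreover, the action of the Hopf map $\eta$ on the slices of $\kq$ can be read off from the proof of Theorem~\ref{thm:slices-kq}, giving us the next result.'' You have unpacked exactly this: compute $\s_q(\kq[\tfrac{1}{\eta}])$ as the sequential colimit of the $\eta$-action on the slices of $\kq$, identify that action from the cofiber sequences in the proof of Theorem~\ref{thm:slices-kq}, and observe that the $\M\ZZ/2$-summands stabilize while the integral summands get reduced mod~$2$. Your treatment of the multiplicative structure and of the comparison with $\KW$ is likewise the natural elaboration of what the paper leaves implicit. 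One small wording point: $\f_q$ is a right adjoint, so ``preserves homotopy colimits'' deserves a word of justification; what you actually need, and what holds, is preservation of filtered homotopy colimits (since the generating objects $\Sigma^{2q,q}\Sigma^\infty X_+$ are compact in $\SH$), and that suffices for the telescope defining $\eta$-inversion.
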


Let $\sliced_{1}^{\kq}(q)\colon \s_{q}\kq \to \Sigma^{1,0}\s_{q+1}\kq$ denote the first slice differential as a map of motivic spectra, 
and similarly for $\KW_{\geq 0}$. 
By Theorem~\ref{thm:slices-kq}, 
$\sliced_{1}^{\kq}(q)$ is a map between finite sums of motivic Eilenberg-MacLane spectra for the groups $\ZZ$ and $\ZZ/2$. 
Thus $\sliced_{1}^{\kq}(q)$ can be described via its restriction $\sliced_{1}^{\kq}(q,i)$ to the summand corresponding to the unique suspension $\Sigma^{q+i,q}$.
We note that $\sliced_{1}^{\kq}(q,i)$ splits into at most three nontrivial components.

Let $\tau\in h^{0,1}\cong\mu_{2}(F)$ and $\rho\in h^{1,1}\cong F^{\times}/2$ denote the classes represented by $-1\in F$; 
$h^{p,q}$ is shorthand for the mod-$2$ motivic cohomology group of $F$ in degree $p$ and weight $q$.
There are canonical maps $\pr\colon\M\ZZ\to\M\ZZ/2$ and $\partial\colon\M\ZZ/2\to\Sigma^{1,0}\M\ZZ$.

\begin{theorem}
\label{thm:diff-kq}
When $\Char(F)\neq 2$ the $\sliced_{1}$-differential in the slice spectral sequence for $\kq$ is given by
\begin{align*}
\sliced_{1}^{\kq}(q,i) 
& =  
\begin{cases}
(\Sq^{3}\Sq^{1},\Sq^{2},0) &  q-1> i\equiv 0\bmod 4 \\
(\Sq^{3}\Sq^{1},\Sq^{2}+\rho\Sq^{1},\tau) &  q-1> i\equiv 2\bmod 4 \\
\end{cases} \\
\sliced_{1}^{\kq}(q,q) 
& =  
\begin{cases}
(0,\Sq^{2}\circ\pr,0) & q\equiv 0 \bmod 4 \\
(0,\Sq^{2}\circ \pr,\tau\circ \pr) & q\equiv 2 \bmod 4 \\
\end{cases} \\
\sliced_{1}^{\kq}(q,q-1) 
& =  
\begin{cases}
(\partial\Sq^{2}\Sq^{1}, \Sq^{2},0) & q\equiv 1 \bmod 4 \\
(\partial\Sq^{2}\Sq^{1}, \Sq^{2}+\rho\Sq^{1},\tau) & q\equiv 3 \bmod 4. 
\end{cases}  
\end{align*}
\end{theorem}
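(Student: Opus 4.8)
The plan is to compute $\sliced_{1}^{\kq}$ componentwise from the naturality of the slice differential along the Wood cofiber sequence, the first slice differential of $\kgl$, the multiplicative structure on $\s_{\ast}\kq$, and Betti realization, thereby reducing to a bounded list of low-slice checks. First I would record the general shape. By Theorem~\ref{thm:slices-kq} every $\s_{q}\kq$ is a finite wedge of $\MZ$'s and $\MZ/2$'s, so $\sliced_{1}^{\kq}(q,i)$ is a map $\Sigma^{q+i,q}\M A\to\Sigma^{1,0}\s_{q+1}\kq$ with $A\in\{\ZZ,\ZZ/2\}$, and it decomposes into components landing in the at most three summands of $\Sigma^{1,0}\s_{q+1}\kq$ whose suspension differs from $\Sigma^{q+i,q}$ by a bidegree with second coordinate $1$ and nonnegative first coordinate, namely by $(0,1)$, $(2,1)$ or $(4,1)$. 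Counting bidegrees, these candidate components lie in $[\M A,\Sigma^{0,1}\M B]$, $[\M A,\Sigma^{2,1}\M B]$, $[\M A,\Sigma^{4,1}\M B]$; using the structure of the integral and mod-$2$ motivic Steenrod algebra of $F$ together with $\Sq^{1}\circ\pr=0$ and $\pr\circ\partial=\Sq^{1}$, these groups are spanned over $h^{\ast,\ast}(F)$ by $\tau$, by $\Sq^{2}$ and $\rho\Sq^{1}$, and by $\Sq^{3}\Sq^{1}=\Sq^{1}\Sq^{2}\Sq^{1}$ respectively, with $\pr$ and $\partial$ inserted at the integral summands. This alone forces $\sliced_{1}^{\kq}(q,i)$ into the asserted form, leaving undetermined only the coefficient of the $\tau$-component, the $\rho\Sq^{1}$-correction to the $\Sq^{2}$-component, and whether each component is nonzero.

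Next I would pin down the components visible to the forgetful map. Since $\forg$, $\hyp$ and $\eta$ are maps of motivic spectra, $\sliced_{1}$ is natural along them, giving $\Sigma^{1,0}(\s_{q+1}\forg)\circ\sliced_{1}^{\kq}(q)=\sliced_{1}^{\kgl}(q)\circ\s_{q}\forg$ and the analogous identity for $\hyp$ into $\Sigma^{2,1}\s_{\bullet-1}\kq$. By $(2,1)$-periodicity together with Propositions~\ref{prop:kgl-bott}--\ref{prop:kglmod2-bott} and Lemma~\ref{lem:kgl-bott}, the differential $\sliced_{1}^{\kgl}(q)\colon\Sigma^{2q,q}\MZ\to\Sigma^{2q+3,q+1}\MZ$ is the integral lift $\partial\Sq^{2}\pr$ of the Milnor operation $\mathsf{Q}_{1}$; and Lemma~\ref{lem:kq-wood} identifies the mod-$2$ composite $\forg\circ\hyp$ with $\Sq^{2}$. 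Feeding in the explicit descriptions of $\s_{q}\forg$ and $\s_{q}\hyp$ extracted from the proof of Theorem~\ref{thm:slices-kq} (projection onto, respectively multiplication by $2$ into, the integral summand of $\s_{2n}\kq$), these naturality squares determine the $\Sq^{2}$-, $\Sq^{3}\Sq^{1}$- and $\partial\Sq^{2}\Sq^{1}$-components, prove the $\Sq^{2}$-components are nonzero, and show the $\tau$-component vanishes in every square where one side is an isomorphism.

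To propagate along $q$ I would invoke the multiplicative structure $\s_{\ast}\kq\iso\M\ZZ[\eta,\sqrt{\alpha}]/(2\eta=0,\eta^{2}\xrightarrow{\delta}\sqrt{\alpha})$: the slice differential is a derivation for the induced pairing on the slice spectral sequence, the class $\sqrt{\alpha}$ has bidegree $(4,2)$ and acts invertibly on $\kq[\tfrac1\eta]=\KW_{\geq0}$, so $\sliced_{1}^{\kq}(q,i)$ depends on $(q,i)$ only through $i\bmod 4$ subject to the boundary constraints $i=q$, $i=q-1$, $i\geq0$; this reduces the entire computation to $q\leq 3$. Those cases I would read off directly from the cofiber sequences displayed in the proof of Theorem~\ref{thm:slices-kq}, using for non-vanishing that complex Betti realization carries the very effective slice tower of $\kq$ to the even Postnikov tower of $\ko$ (Lemma~\ref{lem:kq-complex-realization} and \cite{grso}), whose first $k$-invariant $H\ZZ\to\Sigma^{2}H\ZZ/2$ is $\Sq^{2}$, so that $\sliced_{1}^{\kq}(0)=\Sq^{2}\circ\pr$, and similarly for $q=1,2,3$.

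The remaining ambiguity — the $\tau$-coefficients distinguishing $i\equiv0$ from $i\equiv2\bmod4$, and the $\rho\Sq^{1}$-corrections — is exactly the part annihilated by complex Betti realization, where $\tau\mapsto1$ and $\rho\mapsto0$; this is where I expect the real difficulty to lie. I would resolve it either by real Betti realization over $F=\RR$ (equivalently by working over a field in which $-1$ is not a square, where $\rho$ and $\tau$ are detected on homotopy), combined with the companion computation of the slice differentials of $\KW_{\geq0}$ in Theorem~\ref{thm:slices-kw}, which isolates the $\Sq^{1}$-content after inverting $\eta$; alternatively one may deduce the missing coefficients by restriction along the inclusion of slices $\s_{\ast}\kq\hookrightarrow\s_{\ast}\KQ$ of Theorem~\ref{thm:slices-kq} from the known first slice differential of $\KQ$ in \cite{ro.hermitian}. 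Besides this arithmetic input, the delicate bookkeeping is keeping track of signs and of which summand each $\pr$ and $\partial$ attaches to in the edge cases $i=q$ and $i=q-1$, where the $\MZ$-summand of $\s_{2n}\kq$ interacts with its $\MZ/2$-neighbours.
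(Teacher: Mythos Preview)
Your approach is valid but far more elaborate than what the paper does. The paper's proof is one sentence: by Theorem~\ref{thm:slices-kq} the canonical map $\kq\to\KQ$ induces a natural inclusion on slices respecting the multiplicative structure, so $\sliced_{1}^{\kq}$ is simply the restriction of the already-computed $\sliced_{1}^{\KQ}$ from \cite[Theorem~5.5]{ro.hermitian} to the sub-wedge $\s_{\ast}\kq\subset\s_{\ast}\KQ$. Naturality of the slice tower makes $\sliced_{1}$ compatible with this inclusion, and the formulas in the statement are exactly those of $\sliced_{1}^{\KQ}$ truncated to the surviving summands.

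You actually name this route yourself in your final paragraph (``alternatively one may deduce the missing coefficients by restriction along the inclusion of slices\dots''), but only as a fallback for the $\tau$- and $\rho\Sq^{1}$-ambiguities. In fact it resolves \emph{everything} at once, with no need for the Wood--naturality squares, the $\kgl$-differential, Betti realization, or the derivation/periodicity reduction. Your programme amounts to reproving \cite[Theorem~5.5]{ro.hermitian} from first principles inside the $\kq$-tower; that is a legitimate and self-contained argument, and it has the merit of explaining \emph{why} the operations $\Sq^{2}$, $\Sq^{3}\Sq^{1}$, $\tau$, $\rho\Sq^{1}$ appear, but once the $\KQ$ result is granted, none of it is needed here. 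One small caution on your multiplicative step: $\sliced_{1}(\sqrt{\alpha})\neq 0$ (see~\eqref{eq:diff-kq}), so the Leibniz rule does not give clean $\sqrt{\alpha}$-periodicity; you would really want $(8,4)$-periodicity via $\alpha=\sqrt{\alpha}^{2}$, or simply appeal to the $\KQ$-comparison as the paper does.
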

\begin{proof}
Use Theorem \ref{thm:slices-kq} and the identification of $\sliced_{1}^{\KQ}$ for $\KQ$ in \cite[Theorem 5.5]{ro.hermitian}.
\end{proof}

\begin{theorem}
\label{thm:diff-kw}
When $\Char(F)\neq 2$ the $\sliced_{1}$-differential in the slice spectral sequence for $\KW_{\geq 0}$ is given by
\begin{align*}
\sliced_{1}^{\KW_{\geq 0}}(q,i) 
& =  
\begin{cases} 
(\Sq^{3}\Sq^{1},\Sq^{2},0) & i \equiv 0\bmod 4 \\
(\Sq^{3}\Sq^{1},\Sq^{2}+\rho\Sq^{1},\tau) & i \equiv 2\bmod 4.
\end{cases} 
\end{align*}
\end{theorem}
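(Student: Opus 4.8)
The plan is to deduce the $\sliced_{1}$-differential for $\KW_{\geq 0}$ from the one for $\kq$ by inverting the Hopf map $\eta$, in exactly the way the slices of $\KW_{\geq 0}$ in Theorem~\ref{thm:slices-kw} were obtained from those of $\kq$ in Theorem~\ref{thm:slices-kq}.

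First I would recall $\KW_{\geq 0}\simeq\kq[\tfrac{1}{\eta}]=\hocolim_{k}\Sigma^{-k,-k}\kq$, the colimit over multiplication by $\eta$. Since the effective covers $\f_{q}$ and the slice functors $\s_{q}$ preserve filtered homotopy colimits, the whole slice tower of $\KW_{\geq 0}$ — in particular the $E_{1}$-term and the $\sliced_{1}$-differential of its slice spectral sequence — is the filtered homotopy colimit of the slice towers of the $\Sigma^{-k,-k}\kq$. Using the natural isomorphism $\s_{q}\circ\Sigma^{1,1}\iso\Sigma^{1,1}\circ\s_{q-1}$, this identifies $\s_{q}(\KW_{\geq 0})$ with $\hocolim_{k}\Sigma^{-k,-k}\s_{q+k}\kq$ and, naturally in $q$,
\[
\sliced_{1}^{\KW_{\geq 0}}(q)\iso\hocolim_{k}\ \Sigma^{-k,-k}\,\sliced_{1}^{\kq}(q+k).
\]

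Next I would fix $q$ and a summand index $i$ of $\s_{q}(\KW_{\geq 0})$; by Theorem~\ref{thm:slices-kw} we may take $i\geq 0$ even, and for every sufficiently large $k$ the summand $\Sigma^{q+i,q}\MZ/2$ is the $\Sigma^{-k,-k}$-shift of the $\MZ/2$-summand of $\s_{q+k}\kq$ sitting at position $i$ above the bottom, with $q+k-1>i$. The top, integral, summand of each even slice of $\kq$ is pushed off to infinity and drops out of the colimit — the same mechanism already at work in Theorem~\ref{thm:slices-kw}, which is why the edge cases $i=q$ and $i=q-1$ of Theorem~\ref{thm:diff-kq} do not appear here. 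Restricting $\sliced_{1}^{\kq}(q+k)$ to this summand, Theorem~\ref{thm:diff-kq} gives, precisely because $q+k-1>i$,
\[
\sliced_{1}^{\kq}(q+k,i)=
\begin{cases}
(\Sq^{3}\Sq^{1},\Sq^{2},0) & i\equiv 0 \bmod 4,\\
(\Sq^{3}\Sq^{1},\Sq^{2}+\rho\Sq^{1},\tau) & i\equiv 2 \bmod 4.
\end{cases}
\]
These cohomology operations are unchanged by the suspension $\Sigma^{-k,-k}$ and independent of $k$, so passing to the colimit yields the asserted value of $\sliced_{1}^{\KW_{\geq 0}}(q,i)$.

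The step I expect to be the main obstacle is justifying the interchange of the slice spectral sequence with the homotopy colimit defining $\eta$-inversion — that $\s_{q}$ and $\f_{q}$ commute with filtered homotopy colimits and that the $\sliced_{1}$-differentials assemble accordingly — together with the bookkeeping that the integral summands of $\kq$ disappear, so that only the residue classes $i\equiv 0,2\bmod 4$ survive. A cleaner alternative, paralleling the proof of Theorem~\ref{thm:diff-kq} verbatim, is to combine Theorem~\ref{thm:slices-kw} with the identification of the slice differential for Witt-theory $\KW$ in \cite[\S5]{ro.hermitian}, using the fact from Theorem~\ref{thm:slices-kw} that $\KW_{\geq 0}\to\KW$ induces the natural inclusion on slices.
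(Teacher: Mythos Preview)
Your ``cleaner alternative'' at the end is precisely the paper's proof: combine Theorem~\ref{thm:slices-kw} (the map $\KW_{\geq 0}\to\KW$ is the natural inclusion on slices) with the identification of $\sliced_{1}^{\KW}$ in \cite[Theorem~5.3]{ro.hermitian}. That is the entire argument, verbatim parallel to the proof of Theorem~\ref{thm:diff-kq}.

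Your main approach via $\eta$-inversion of $\kq$ is correct but more roundabout. It routes through Theorem~\ref{thm:diff-kq} (hence ultimately through $\sliced_{1}^{\KQ}$) rather than directly through $\sliced_{1}^{\KW}$, and it requires the filtered-colimit compatibility of the slice tower that you yourself flag as the main obstacle. The paper sidesteps both issues: since $\sliced_{1}^{\KW}$ is already recorded in \cite{ro.hermitian}, the only input needed from this paper is the inclusion-on-slices statement of Theorem~\ref{thm:slices-kw}, and no colimit interchange enters. Your route has the mild virtue of staying inside the $\kq$ computations and making the disappearance of the integral summands explicit, but the paper opts for the one-line comparison with the periodic spectrum.
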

\begin{proof}
This follows from Theorem \ref{thm:slices-kw} and the identification of $\sliced_{1}^{\KW}$ for $\KW$ recorded in \cite[Theorem 5.3]{ro.hermitian}.
\end{proof}

Following \cite[\S4]{ro.mult-slices-hermitian} we calculate the first slice differentials for $\kq$ and $\KW_{\geq 0}$ in terms of the multiplicative generators for their slices.

We note that $\dd^{\kq}_1(\sqrt{\alpha}^m{\eta}^n)$ is given by 
\begin{equation}
\label{eq:diff-kq}
\begin{cases} 
\tau\sqrt{\alpha}^{m-1}{\eta}^{n+3} + (\Sq^2+\rho \Sq^1) \sqrt{\alpha}^m{\eta}^{n+1} + \Sq^3\Sq^1 \sqrt{\alpha}^{m+1}{\eta}^{n-1} & m\equiv 1(2),n>1 \\ 
\Sq^2 \sqrt{\alpha}^m{\eta}^{n+1} + \Sq^3\Sq^1\sqrt{\alpha}^{m+1}{\eta}^{n-1} & m\equiv 0(2),n>1\\
\tau \sqrt{\alpha}^{m-1}{\eta}^{4} + (\Sq^2+\rho\Sq^1)\sqrt{\alpha}^m{\eta}^{2} + \delta\Sq^2\Sq^1 \sqrt{\alpha}^{m+1} & m\equiv 1(2),n=1 \\ 
\Sq^2 \sqrt{\alpha}^m{\eta}^{2} + \delta\Sq^2\Sq^1\sqrt{\alpha}^{m+1} & m\equiv 0(2),n=1\\
\tau\pr \sqrt{\alpha}^{m-1}{\eta}^{3} + \Sq^2\pr \sqrt{\alpha}^m{\eta}  & m\equiv 1(2),n=0 \\ 
\Sq^2 \sqrt{\alpha}^m{\eta} & m\equiv 0(2),n=0,
\end{cases} 
\end{equation}
while $\dd^{\KW_{\geq 0}}_1(\sqrt{\alpha}^m{\eta}^n)$ is given by
\begin{equation}
\label{eq:diff-kw}
\begin{cases} 
\tau \sqrt{\alpha}^{m-1}{\eta}^{n+3} + (\Sq^2+\rho \Sq^1) \sqrt{\alpha}^m{\eta}^{n+1} + \Sq^3\Sq^1 \sqrt{\alpha}^{m+1}{\eta}^{n-1}  & m\equiv 1(2)\\ 
\Sq^2 \sqrt{\alpha}^m{\eta}^{n+1} + \Sq^3\Sq^1 \sqrt{\alpha}^{m+1}{\eta}^{n-1} & m\equiv 0(2).  \\
\end{cases}
\end{equation}

\begin{remark}
The corresponding formula for $\dd^{\KQ}_1(\sqrt{\alpha}^m{\eta}^n)$ in \cite[\S4]{ro.mult-slices-hermitian} contains a typo when $m\equiv 1(2),n=0$.
We thank Bert Guillou for pointing this out to us.
\end{remark}

\begin{remark}
\label{remark:very-effective-kq-kw}
Bachmann \cite{bachmann.very} determined the very effective slices of $\KQ$ and hence of $\kq$ up to extensions. 
Additional work is needed to identify the corresponding first very effective slice differentials. 
A first step is to calculate the endomorphisms of the very effective zero slice of $\KQ$.
The very effective slices of $\KW_{\geq 0}$ were determined up to extensions in \cite[Lemma 6]{bachmann.very}.
\end{remark}

\section{Homotopy computations}
\label{sec:homotopy-computations}

First we identify the target of the slice spectral sequences for the sphere and very effective hermitian $K$-theory.
\begin{theorem}
\label{theorem:ccsssforspherekgl}
Over a field $F$ of characteristic $\neq 2$ there are conditionally convergent slice spectral sequences
\begin{equation}
\label{equation:splicespectralsequencesphere}
\pi_{\star}\s_{\ast}\unit
\Longrightarrow
\pi_{\star}\unit^{\wedge}_{\eta},
\end{equation}
and 
\begin{equation}
\label{equation:splicespectralsequencekq}
\pi_{\star}\s_{\ast}\kq
\Longrightarrow
\pi_{\star}\kq^{\wedge}_{\eta}.
\end{equation}
\end{theorem}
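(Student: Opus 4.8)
The plan is to couple the formal convergence of the slice spectral sequence with an identification of its abutment. For every motivic spectrum $\E$ the slice spectral sequence conditionally converges to $\pi_{\star}$ of the slice completion $\E^{\slicecomp} := \holim_{q}\E/\f_{q}\E$, so the content of the theorem is the two equivalences $\unit^{\slicecomp}\simeq\unit^{\wedge}_{\eta}$ and $\kq^{\slicecomp}\simeq\kq^{\wedge}_{\eta}$, and I would deduce both from one formal skeleton. First, $\E^{\slicecomp}$ is always $\eta$-complete: the Hopf map lifts along $\f_{1}\unit\to\unit$ since the obstruction lies in $\pi_{1,1}\s_{0}\unit = \pi_{1,1}\MZ = 0$, so (the slice filtration being multiplicative) multiplication by $\eta$ raises the effective slice filtration by one; hence $\eta^{q}$ acts by zero on the effective spectrum $\E/\f_{q}\E$, whose $q$-th effective cover $\f_{q}(\E/\f_{q}\E)$ vanishes, such a spectrum is $\eta$-complete, and therefore so is the homotopy limit $\E^{\slicecomp}$. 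In particular the slice completion map factors as $\E\to\E^{\wedge}_{\eta}\to\E^{\slicecomp}$. Since slice complete spectra are closed under extensions and homotopy limits, the cofiber sequences $\Sigma^{n,n}(\E/\eta)\to\E/\eta^{n+1}\to\E/\eta^{n}$ reduce the slice completeness of $\E^{\wedge}_{\eta} = \holim_{n}\E/\eta^{n}$ to that of $\E/\eta$; granting that, $\E^{\wedge}_{\eta}$ and $\E^{\slicecomp}$ are each at once slice complete and $\eta$-complete, so the natural maps between them are mutually inverse.

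For $\E = \kq$ this last input is furnished by the Wood cofiber sequence of Proposition~\ref{prop:wood}, which identifies $\kq/\eta$ with $\kgl = \f_{0}\KGL$ (Lemma~\ref{lem:kgl-effective-perfect}). The spectrum $\f_{0}\KGL$ is slice complete: the fiber of $\f_{0}\KGL\to(\f_{0}\KGL)^{\slicecomp}$ is $\holim_{q}\f_{q}\KGL$, and $\f_{q}\KGL$ is $q$-connected by \cite{kerz-strunk} (as recorded before Lemma~\ref{lem:kglmod2-effective}), so this homotopy limit is contractible. Feeding this into the skeleton gives the conditional convergence of \eqref{equation:splicespectralsequencekq}.

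For $\E = \unit$ the remaining input, that $\unit/\eta$ is slice complete --- equivalently $\unit^{\slicecomp}\simeq\unit^{\wedge}_{\eta}$ --- is established in \cite{rso.first}, and plugging it into the skeleton yields the conditional convergence of \eqref{equation:splicespectralsequencesphere}. I expect the sphere case to be the real obstacle: unlike $\kgl$, the spectrum $\unit/\eta$ carries no Landweber-exact summand whose slice tower is visibly highly connected, so its slice completeness is not a soft consequence of \cite{kerz-strunk}-type vanishing but depends on the explicit structure of the slices of the motivic sphere.
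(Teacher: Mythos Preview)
Your proposal is correct and follows essentially the same approach as the paper. The paper's proof is terser: it cites \cite[\S4]{rso.hlp} for the formal skeleton you spell out (that $\E^{\slicecomp}\simeq\E^{\wedge}_{\eta}$ once $\E/\eta$ is slice complete) and invokes Lemma~\ref{lem:kgl-effective-perfect}, Proposition~\ref{prop:wood}, and \cite[Lemma 3.11]{rso.hlp} for the slice completeness of $\kq/\eta\simeq\kgl=\f_0\KGL$, where you instead appeal directly to the connectivity estimate from \cite{kerz-strunk} recorded before Lemma~\ref{lem:kglmod2-effective}; for the sphere both you and the paper defer to \cite{rso.first}.
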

\begin{proof}
Here \eqref{equation:splicespectralsequencesphere} is shown in \cite[\S3]{rso.first}.
The only issue in \eqref{equation:splicespectralsequencekq} is to identify the quotient of $\kq$ by $\eta$ with a slice complete spectrum \cite[\S4]{rso.hlp}.
This follows directly from Lemma \ref{lem:kgl-effective-perfect}, Proposition \ref{prop:wood} and \cite[Lemma 3.11]{rso.hlp}.
\end{proof}

To formulate our identification of the $0$-line of $\kq$ we recall the definition of Milnor-Witt $K$-theory $K^{MW}_{*}(F)$ in \cite{morel.field}.
It is the quotient of the free associative integrally graded ring on the set of symbols $[F^\times] := \{[u]\mid u\in F^\times\}$ in degree $1$ and $\eta$ in degree $-1$ by the homogeneous ideal enforcing the relations
\begin{itemize}
\item[(1)] $[uv] = [u]+[v]+\eta[u][v]$ ($\eta$-twisted logarithm),
\item[(2)] $[u][v] = 0$ for $u+v=1$ (Steinberg relation),
\item[(3)] $[u]\eta=\eta[u]$ (commutativity), and
\item[(4)] $(2+[-1]\eta)\eta = 0$ (hyperbolic relation).
\end{itemize}
Milnor-Witt $K$-theory is $\varepsilon$-commutative for $\varepsilon = -(1+[-1]\eta)$. 
By work of Morel \cite{morelmotivicpi0} there is an isomorphism with the graded ring of endomorphisms of the sphere
\[
K^{MW}_{*}(F)
\cong 
\bigoplus_{n\in\ZZ}\pi_{n,n}\unit.
\]

Moreover, 
$K^{MW}_{0}(F) \cong GW(F)$, the Grothendieck-Witt ring of quadratic forms with its standard presentation,
inverting $\eta$ in $K^{MW}_{*}(F)$ yields the ring of Laurent polynomials $W(F)[\eta^{\pm 1}]$ over the Witt ring, 
and $K^{MW}_{*}(F)/\eta = K^M_*(F)$, the Milnor $K$-theory ring of $F$.

\begin{theorem}
\label{theorem:zeroline}
Over a field $F$ of characteristic $\neq 2$ the unit map $\unit\to\kq$ induces an isomorphism on $0$-lines
\begin{equation}
\label{equation:zerolines}
K^{MW}_{*}(F)
\overset{\cong}{\to}
\bigoplus_{n\in\ZZ}\pi_{n,n}\kq.
\end{equation}
\end{theorem}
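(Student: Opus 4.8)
The plan is to compare $\kq$ with the motivic sphere along the unit map, using Morel's theorem \cite{morelmotivicpi0} that $\bigoplus_{n}\pi_{n,n}\unit\cong K^{MW}_{*}(F)$, the Wood cofiber sequence \eqref{equation:wood}, the identification of the diagonal homotopy sheaves of $\kgl$ with Milnor $K$-theory, and the connectivity of $\kq$ recorded in Theorem~\ref{thm:slices-kq}. Since on the diagonal multiplication by $\eta$ raises bidegree by $(1,1)$ while $\eta$ has Milnor--Witt degree $-1$, the statement to prove is $\pi_{n,n}\kq\cong K^{MW}_{-n}(F)$, compatibly with $\unit\to\kq$, for every $n$. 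The common mechanism is this: applying homotopy sheaves to \eqref{equation:wood} and restricting to the diagonal, the vanishing $\pi_{n-2,n-1}\kq=0$ (connectivity, as used already in the proof of Proposition~\ref{prop:wood}) leaves for each $n$ the four-term exact sequence
\[
\pi_{n+1,n}\kgl\xrightarrow{\hyp}\pi_{n-1,n-1}\kq\xrightarrow{\eta}\pi_{n,n}\kq\xrightarrow{\forg}\pi_{n,n}\kgl\to 0,
\]
and I will analyze it against Morel's presentation of $K^{MW}_{*}(F)$ by relations (1)--(4).

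First I would settle $n\geq 0$. Because $\kq=\f_{0}(\KQ_{\geq 0})$ and neither $\f_{0}$ nor the connective cover $(-)_{\geq 0}$ affects $\pi_{s,t}$ for $t\geq 0$ respectively $s\geq t$, the canonical map $\kq\to\KQ$ is an isomorphism on $\pi_{n,n}$ for $n\geq 0$; in particular $\pi_{0,0}\kq\cong\pi_{0,0}\KQ\cong GW(F)=K^{MW}_{0}(F)$, and the unit map realizes the canonical isomorphism. In the displayed sequence one has $\pi_{n,n}\kgl=K_{-n}(F)=0$ for $n\geq 1$, so $\eta$ is surjective there, and also $\pi_{n+1,n}\kgl=K_{1-n}(F)=0$ for $n\geq 2$, so $\eta$ is then an isomorphism; at $n=1$ its kernel is the image of the hyperbolic map out of $\pi_{2,1}\kgl\cong K_{0}(F)=\ZZ$, which sends the Bott element $\beta$ to the class $H=\langle 1\rangle+\langle -1\rangle$ of the hyperbolic plane. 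Hence $\pi_{n,n}\kq\cong GW(F)/\ZZ H\cong W(F)\cong K^{MW}_{-n}(F)$ for all $n\geq 1$, and naturality of $\eta$ together with the $n=0$ case (and the fact that $\ZZ H=\ker(\eta\colon GW(F)\to W(F))$ both for $\unit$ and for $\kq$) shows the unit map is an isomorphism in this range.

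Next I would treat $n\leq -1$ by downward induction on $-n$, the base case $n=0$ being established above. Here the diagonal homotopy of $\kgl$ is unramified Milnor $K$-theory, $\pi_{n,n}\kgl\cong K^{M}_{-n}(F)$, coming from $\kgl=\f_{0}\KGL$ and Theorem~\ref{thm:slices-kgl}, so the four-term sequence becomes $\pi_{n+1,n}\kgl\xrightarrow{\hyp}\pi_{n-1,n-1}\kq\xrightarrow{\eta}\pi_{n,n}\kq\xrightarrow{\forg}K^{M}_{-n}(F)\to 0$. Assuming inductively that $\pi_{n,n}\kq\cong K^{MW}_{-n}(F)$ with $\forg$ the canonical projection onto $K^{M}_{-n}(F)$, the image of $\eta$ is the kernel $I^{1-n}(F)$ of that projection; one must then compute the image of $\hyp$. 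The key point is that on this line $\hyp$ is multiplication by the hyperbolic form $h$, whose image in $K^{MW}_{1-n}(F)$ is exactly $2K^{M}_{1-n}(F)$ — precisely the kernel of multiplication by $\eta$ on Milnor--Witt $K$-theory. The four-term sequence then coincides with the exact sequence
\[
K^{M}_{1-n}(F)\xrightarrow{\,h\,}K^{MW}_{1-n}(F)\xrightarrow{\,\eta\,}K^{MW}_{-n}(F)\to K^{M}_{-n}(F)\to 0
\]
read off from Morel's presentation, and comparison along the unit map — which is an isomorphism on the quotient $I^{1-n}(F)$ by the inductive step and on $\ker(\eta)=hK^{MW}_{1-n}(F)$ by the description of $\hyp$ — forces $\pi_{n-1,n-1}\kq\cong K^{MW}_{1-n}(F)$ with the unit map an isomorphism, closing the induction.

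The main obstacle is the identification of the hyperbolic map: one must know both $\hyp(\beta)=H$ at the bottom and, more substantially, that on each higher diagonal line $\hyp$ agrees with multiplication by $h$, with image the summand $2K^{M}$ that is the kernel of multiplication by $\eta$. I expect this to follow from the explicit description of the hyperbolic map of hermitian $K$-theory together with its compatibility with \eqref{equation:wood} and with $\forg$ (in the spirit of Lemma~\ref{lem:kq-wood}), cross-checked over a complex-embedded field against the topological Wood cofiber sequence via Betti realization (Lemma~\ref{lem:kq-complex-realization}). A subsidiary point is that the extension presenting $\pi_{n-1,n-1}\kq$ from $I^{1-n}(F)$ and $2K^{M}_{1-n}(F)$ must be seen to agree with the one presenting $K^{MW}_{1-n}(F)$; this is handled by naturality of the unit map rather than by a direct computation with extensions.
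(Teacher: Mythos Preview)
Your approach differs substantially from the paper's. The paper does not induct along the Wood sequence; instead it compares the $\eta$-arithmetic squares for $\unit$ and $\kq$, producing for $\E\in\{\unit,\kq\}$ short exact sequences
\[
0\to \pi_{n,n}\E \to \pi_{n,n}\E^\wedge_\eta \oplus \pi_{n,n}\E[\tfrac{1}{\eta}] \to \pi_{n,n}\E^\wedge_\eta[\tfrac{1}{\eta}]\to 0.
\]
On the $\eta$-inverted terms the unit map is an isomorphism onto $W(F)$ by Morel. On the $\eta$-complete terms the paper shows that the slice spectral sequences~\eqref{equation:splicespectralsequencesphere} and~\eqref{equation:splicespectralsequencekq} agree on all terms contributing to the $0$-line (same $E_1$-terms by Theorem~\ref{thm:slices-kq} and the slice computation for $\unit$ in \cite{rso.first}, same differentials by Theorem~\ref{thm:diff-kq}), hence same $E_\infty$ and same abutment. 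Comparing the two short exact sequences finishes the argument.

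Your inductive step for $n\leq -1$ has a genuine gap at precisely the point you flag as the main obstacle, and the proposed resolution does not close it. First, asserting that the image of $\hyp$ in $\pi_{n-1,n-1}\kq$ is $2K^M_{1-n}(F)\subset K^{MW}_{1-n}(F)$ already presupposes $\pi_{n-1,n-1}\kq\cong K^{MW}_{1-n}(F)$, which is exactly what the step is meant to establish; naturality of the unit map does not break this circularity, since at this stage you do not know the unit map is injective or surjective in degree $(n-1,n-1)$. Second, the left-hand term of your four-term Wood sequence is not Milnor $K$-theory: already for $n=-2$ one checks, by the same dévissage as around~\eqref{equation:lespi-1-1}, that $\pi_{-1,-2}\kgl\cong\pi_{-1,-2}\KGL=K_3(F)$, which differs from $K^M_3(F)$ by the indecomposable part $K_3^{\mathrm{ind}}(F)$. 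So the Wood sequence does not ``coincide'' with your Milnor--Witt sequence. Betti realization, available only over subfields of $\CC$, cannot detect or repair this over a general $F$. To make the inductive approach work you would need independent control on the $1$-line of the unit map --- for instance surjectivity of $\pi_{n+1,n}(\unit/\eta)\to\pi_{n+1,n}\kgl$ to feed a five-lemma --- and that is essentially as hard as the theorem itself.
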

\begin{proof}
Recall from \cite[\S5]{rso.first} the short exact sequence
\begin{equation}
\label{equation:ses1}
0 
\to \pi_{n,n}{\unit} 
\to \pi_{n,n}{\unit}^\wedge_\eta\directsum \pi_{n,n}{\unit}[\tfrac{1}{\eta}] 
\to \pi_{n,n}{\unit}^\wedge_\eta[\tfrac{1}{\eta}]\to 0.
\end{equation}
Similarly, 
following \cite[\S3]{rso.first}, 
the $\eta$-arithmetic square 
\[ 
\xymatrix{ 
\kq \ar[r] \ar[d] & \kq[\frac{1}{\eta}] \ar[d]  \\
{\kq}^\wedge_\eta \ar[r] & {\kq}^\wedge_\eta[\frac{1}{\eta}]   
}
\]
for very effective $K$-theory yields a short exact sequence
\begin{equation}
\label{equation:ses2}
0 
\to \pi_{n,n}{\kq} 
\to \pi_{n,n}{\kq}^\wedge_\eta\directsum \pi_{n,n}{\kq}[\tfrac{1}{\eta}] 
\to \pi_{n,n}{\kq}^\wedge_\eta[\tfrac{1}{\eta}]\to 0.
\end{equation}
Here we use the vanishing of $\pi_{n+1,n}{\kq}^\wedge_\eta[\tfrac{1}{\eta}]$ and $\pi_{n-1,n}{\kq}$.
On the terms contributing to the $0$-line, 
the map from \eqref{equation:splicespectralsequencesphere} to \eqref{equation:splicespectralsequencekq} is an isomorphism.
Theorem \ref{thm:diff-kq} combined with the same computations as in \cite[\S4]{rso.first} show the said isomorphism persists to the $E^{\infty}$-page.
By invoking Theorem \ref{theorem:ccsssforspherekgl} we conclude $\pi_{n,n}{\unit}^\wedge_\eta\overset{\cong}{\to} \pi_{n,n}{\kq}^\wedge_\eta$ and 
$\pi_{n,n}{\unit}^\wedge_\eta[\tfrac{1}{\eta}]\overset{\cong}{\to}\pi_{n,n}{\kq}^\wedge_\eta[\tfrac{1}{\eta}]$.
As noted above,
by \cite{morelmotivicpi0} we have $\pi_{n,n}{\unit}[\tfrac{1}{\eta}]\overset{\cong}{\to} \pi_{n,n}{\kq}[\tfrac{1}{\eta}]\cong \pi_{n,n}{\KW_{\geq 0}} \cong W(F)$.
A straightforward comparison between \eqref{equation:ses1} and \eqref{equation:ses2} allows us to deduce \eqref{equation:zerolines}.
\end{proof}

\begin{remark}
It was pointed to us by Bachmann that the results of \cite{bachmann.very} yield an isomorphism of the zeroth generalized slices $\tilde{\s}_0\unit\cong \tilde{\s}_0 \KQ$. This gives another proof for Theorem~\ref{theorem:zeroline}.
\end{remark}

We note the isomorphism $\pi_{n+1,n}\kq\overset{\cong}{\to}\pi_{n+1,n}\kq^\wedge_\eta$ follows as in \cite[Proposition 5.3]{rso.first}.
Thus for the purpose of identifying the $1$-line of $\kq$ we may use Theorem \ref{thm:diff-kq} and computations as in \cite[\S4]{rso.first} to deduce:
\begin{proposition}
\label{prop:Einfty-kq}
The only nontrivial terms in \eqref{equation:splicespectralsequencekq} contributing to $\pi_{n+1,n}\kq$ are 
\[ 
E^\infty_{n+1,q,n}(\kq) 
= 
\begin{cases}
h^{-n+1,-n+2}/\Sq^{2}(h^{-n-1,-n+1}) & q=2 \\
h^{-n,-n+1}/\Sq^{2}\pr(H^{-n-2,-n}) & q=1 \\
H^{-n-1,-n} & q=0.
\end{cases}
\]
\end{proposition}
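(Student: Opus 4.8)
The plan is to run the slice spectral sequence \eqref{equation:splicespectralsequencekq} on the $1$-line exactly as the slice spectral sequence for $\unit$ is run in \cite[\S4]{rso.first}, feeding in the slices of Theorem~\ref{thm:slices-kq} and the $\sliced_1$-differentials of Theorem~\ref{thm:diff-kq}. By the remark preceding the proposition, $\pi_{n+1,n}\kq\cong\pi_{n+1,n}\kq^\wedge_\eta$, and \eqref{equation:splicespectralsequencekq} is conditionally convergent by Theorem~\ref{theorem:ccsssforspherekgl}; so it suffices to compute $E^\infty_{n+1,q,n}(\kq)$ for all $q$. First I would read off the $E^1$-page on the $1$-line: from $\pi_{p,w}\Sigma^{a,b}\MZ/2=h^{a-p,b-w}$ and $\pi_{p,w}\Sigma^{a,b}\MZ=H^{a-p,b-w}$, together with the vanishing of $h^{p,w}$ (and of $H^{p,w}$) for a field when $p<0$ or $p>w$, Theorem~\ref{thm:slices-kq} shows that the only summands contributing to $\pi_{n+1,n}$ are $\s_0\kq=\MZ$ (giving $H^{-n-1,-n}$), $\s_1\kq=\Sigma^{1,1}\MZ/2$ (giving $h^{-n,-n+1}$), the summand $\Sigma^{2,2}\MZ/2$ of $\s_2\kq$ (giving $h^{-n+1,-n+2}$; the integral summand $\Sigma^{4,2}\MZ$ contributes $H^{3-n,2-n}=0$), and, for each $q\ge 3$, the bottom summand $\Sigma^{q,q}\MZ/2$ of $\s_q\kq$, whose $\pi_{n+1,n}$ is a group of the shape $h^{p,p+1}$.

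The key structural input is that multiplication by $\tau$ is an isomorphism $h^{p,p}=K^M_p(F)/2\xrightarrow{\iso}h^{p,p+1}$, a consequence of Beilinson--Lichtenbaum since $\mu_2^{\otimes\ast}$ is a constant \'etale sheaf over $F$. I would now run the $\sliced_1$-differentials, matching each component of $\sliced_1^{\kq}(q,i)$ from Theorem~\ref{thm:diff-kq} to the Eilenberg--MacLane summand of the target slice dictated by its bidegree. For $q\ge 3$ the class $E^1_{n+1,q,n}\cong h^{p,p+1}$ receives the ``$\tau$'', respectively ``$\tau\circ\pr$'', component of the differential out of the second summand of $\s_{q-1}\kq$, namely the $\MZ/2$-summand $\Sigma^{q+1,q-1}\MZ/2$ for $q\ge 4$ and the integral summand $\Sigma^{4,2}\MZ$ for $q=3$; as multiplication by $\tau$ (preceded by $\pr$ when $q=3$) is surjective onto $h^{p,p+1}$, this incoming $\sliced_1$ is onto and $E^\infty_{n+1,q,n}(\kq)=0$ for $q\ge 3$. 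For $q=0,1,2$ the outgoing $\sliced_1$ vanishes: on a class $\tau x\in h^{p,p+1}$ the pertinent component is $\Sq^2$, and $\Sq^2(\tau x)=\Sq^2(\tau)\,x+\Sq^1(\tau)\,\Sq^1(x)+\tau\,\Sq^2(x)=0$, since $\Sq^2$ kills the degree-zero class $\tau$ while $\Sq^1(x)\in h^{p+1,p}=0$ and $\Sq^2(x)\in h^{p+2,p+1}=0$. The incoming $\sliced_1$ is $\Sq^2\circ\pr$ out of $\s_0\kq$ for $q=1$ and $\Sq^2$ out of $\s_1\kq$ for $q=2$, producing the displayed quotients; for $q=0$ there is no incoming differential of any length because the negative slices of $\kq$ vanish.

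It remains to dispatch the higher differentials: a $\sliced_r$ with $r\ge 2$ out of a surviving class in slice filtration $q\in\{0,1,2\}$ lands on the $0$-line in slice filtration $q+r\ge 2$, and incoming $\sliced_r$ originate on the $2$-line; these vanish by the same computations in \cite[\S4]{rso.first} that underpin the proof of Theorem~\ref{theorem:zeroline}, giving the asserted $E^\infty$-terms. I expect the main obstacle to be the bookkeeping above: pinning down which of the (at most three) components of each $\sliced_1^{\kq}(q,i)$ in Theorem~\ref{thm:diff-kq} hits which Eilenberg--MacLane summand of the adjacent slice, and checking via the $\tau$-divisibility of $h^{p,p+1}$ that the entire $\eta$-tower ($q\ge 3$) already collapses at $E^2$; once this dictionary is fixed, the remainder is formal and runs parallel to \cite{rso.first}.
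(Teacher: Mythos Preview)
Your proposal is correct and follows essentially the same approach as the paper, which gives no detailed argument beyond the sentence ``use Theorem~\ref{thm:diff-kq} and computations as in \cite[\S4]{rso.first}''. You have unpacked precisely those computations: reading off the $E^1$-page from Theorem~\ref{thm:slices-kq}, killing the $\eta$-tower for $q\geq 3$ via the surjective $\tau$ (resp.\ $\tau\circ\pr$) component of $\sliced_1$, checking the outgoing $\sliced_1$ on $q\leq 2$ vanishes because every class in $h^{p,p+1}$ is $\tau$-divisible, and deferring the higher differentials to \cite[\S4]{rso.first} exactly as the paper does. One cosmetic remark: your Cartan formula for $\Sq^2(\tau x)$ omits the extra factor of $\tau$ on the cross term, but since $\Sq^1(x)\in h^{p+1,p}=0$ this has no effect on the argument.
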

Here $h^{i,j}$ and $H^{i,j}$ denote the mod-$2$ and integral motivic cohomology groups of $F$ in degree $i$ and weight $j$.
This determines the $1$-line of $\kq$ up to extensions.
When $n>1$ we read off the vanishing $\pi_{n+1,n}\kq=0$.
The first nontrivial group on the $1$-line is $\pi_{2,1}\kq\cong\mu_{2}(F)\cong\ZZ/2$.
When $n=0$ we obtain $\pi_{1,0}\kq\cong \pi_{1,0}\KQ\cong F^{\times}/2\oplus\mu_{2}(F)$.
Furthermore,
there is a short exact sequence
\begin{equation}
0 
\to h^{2,3}/\Sq^{2}(h^{0,2})
\to \pi_{0,-1}\kq 
\to h^{1,2}
\to 
0.
\end{equation}
When $n\leq -2$ the group $\pi_{n+1,n}\kq$ surjects onto the integral motivic cohomology group $H^{-n-1,-n}$,
with kernel described by Proposition \ref{prop:Einfty-kq}.
\vspace{0.1in}

{\bf Acknowledgements.}
Mark Behrens asked about a motivic version of $\ko$ during Bert Guillou's {\tt ECHT} talk in Spring 2017. 
His question prompted a first version of this paper.
We also acknowledge Tom Bachmann's related work in \cite{bachmann.very}.
Theorem~\ref{thm:slices-kq} was first obtained in a different way during the first author's visit to Universit\"at Osnabr\"uck in November 2016.
Work on this paper took place at Institut Mittag-Leffler in Spring 2017,
where the first author held a postdoctoral fellowship financed by Vergstiftelsen, 
and the Hausdorff Research Institute for Mathematics in Summer 2017. 
We thank both institutions for excellent working conditions and support.
The authors acknowledge support from both the DFG priority programme ``Homotopy theory and algebraic geometry'' and the RCN programme ``Motivic Hopf equations''. 
Ananyevskiy is supported by RFBR grants 15-01-03034 and 16-01-00750, and by ``Native towns", a social investment program of PJSC ``Gazprom Neft".
{\O}stv{\ae}r is supported by a Friedrich Wilhelm Bessel Research Award from the Humboldt Foundation.

\bibliographystyle{plain}
\bibliography{sphereslices}

\vspace{0.1in}

\begin{center}
St.~Petersburg Department, Steklov Math.~Institute, and \\ Chebyshev Laboratory, St.~Petersburg State University, Russia. \\
e-mail: alseang@gmail.com
\end{center}
\begin{center}
Institut f\"ur Mathematik, Universit\"at Osnabr\"uck, Germany.\\
e-mail: oliver.roendigs@uni-osnabrueck.de
\end{center}
\begin{center}
Department of Mathematics, University of Oslo, Norway.\\
e-mail: paularne@math.uio.no
\end{center}

\end{document}